\newcommand{\mb}{\mathbb}
\newcommand{\mc}{\mathcal}
\newcommand{\im}{{\rm im}}
\newcommand{\ol}{\overline}
\newcommand{\wt}{\widetilde}
\newcommand{\codim}{{\rm codim}}
\newcommand{\Hilb}{{\rm Hilb}}
\newcommand{\RHilb}{\wt{{\rm Hilb}}}
\newcommand*{\sheafhom}{\mathscr{H}\kern -.5pt om}
\renewcommand{\O}{\mathscr{O}}
\newcommand*{\on}[1]{\operatorname{#1}}
\newcommand{\M}{\ol{\mc{M}}}
\newcommand{\ev}{\on{ev}}
\newcommand{\Ve}{\on{Vertex}}
\newcommand{\Ed}{\on{Edge}}
\newcommand{\Fl}{\on{Flag}}
\newcommand{\Ta}{\on{Tail}}
\begin{document}
\theoremstyle{plain}
\newtheorem{Thm}{Theorem}[section]
\newtheorem{Cor}[Thm]{Corollary}
\newtheorem{Conj}[Thm]{Conjecture}
\newtheorem{Pro}[Thm]{Problem}
\newtheorem{Main}{Main Theorem}
\renewcommand{\theMain}{}
\newcommand{\Sol}{\text{Sol}}
\newtheorem{Lem}[Thm]{Lemma}
\newtheorem{Claim}[Thm]{Claim}
\newtheorem{Prop}[Thm]{Proposition}
\newtheorem{Exam}{Example}
\newtheorem{ToDo}{To Do}

\theoremstyle{definition}
\newtheorem{Def}[Thm]{Definition}
\newtheorem{Exer}[Thm]{Exercise}
\newtheorem{Rem}[Thm]{Remark}

\theoremstyle{remark}
%\noindent Sources (any of the problems):

\title{A note on rational curves on general Fano hypersurfaces}
\author{Dennis Tseng}
\address{Dennis Tseng, Harvard University, Cambridge, MA 02138}
\email{DennisCTseng@gmail.com}
\date{\today}

\allowdisplaybreaks

\begin{abstract}
We show the Kontsevich space of rational curves of degree at most roughly $\frac{2-\sqrt{2}}{2}n$ on a general hypersurface $X\subset \mb{P}^n$ of degree $n-1$  is equidimensional of expected dimension and has two components: one consisting generically of smooth, embedded rational curves and the other consisting of multiple covers of a line. This proves more cases of a conjecture of Coskun, Harris, and Starr and shows the Gromov-Witten invariants in these cases are enumerative.
%We show the space of smooth rational curves of degree at most roughly $\frac{2-\sqrt{2}}{2}n$ on a general hypersurface $X\subset \mb{P}^n$ of degree $n-1$ is irreducible and of the expected dimension. This proves more cases of a conjecture of Coskun, Harris, and Starr. 
\end{abstract}

\maketitle

\section{Introduction}
Our investigation is motivated by the following conjecture by Coskun, Harris, and Starr.
\begin{Conj}
[{\cite[Conjectures 1.3 and 1.4]{CS}}]
\label{CHS}
Let $X\subset \mb{P}^n$ be a general hypersurface of degree $d\leq n$ and dimension at least 3. Then, the open locus $R_{e}(X)$ in the Hilbert scheme of $X$ parameterizing smooth rational curves of degree $e$ is irreducible of dimension $e(n+1-d)+n-4$. Furthermore, if $d\leq n-1$, then the evaluation map $\M_{0,1}(X,e)\to X$ is flat. 
\end{Conj}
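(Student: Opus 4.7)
The plan is to study the universal incidence variety $I = \{(f, X) \in \M_{0,0}(\mb{P}^n, e) \times |\O(d)| : f(\mb{P}^1) \subset X\}$ and its two projections. The projection to $\M_{0,0}(\mb{P}^n, e)$ has linear fibers of codimension $de + 1$ in $|\O(d)|$, so the generic fiber of the projection $I \to |\O(d)|$ has the expected dimension $e(n+1-d) + n - 4$. The two parts of the conjecture then reduce to: (a) for general $X$, the locus $R_e(X)$ is irreducible of exactly this expected dimension, and (b) for $d \leq n-1$, the corresponding expected-dimension estimate extends to the whole Kontsevich space $\M_{0,1}(X, e)$ with equidimensional fibers over $X$.

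For (a), I would proceed by strong induction on $e$. The base case $e = 1$ is the classical theory of lines on hypersurfaces, known in the relevant range by Debarre--de Jong type results. The inductive step splits into two halves. First, prove the upper bound: every component of $R_e(X)$ has dimension at most $e(n+1-d) + n - 4$. This is done by showing that for the general $C$ in such a component, $h^0(C, N_{C/X})$ does not exceed the expected count, using the short exact sequence $0 \to N_{C/X} \to N_{C/\mb{P}^n} \to \O_C(d) \to 0$ and a general-position argument on the equation of $X$. Second, prove irreducibility by smoothing: any smooth $C \in R_e(X)$ can be degenerated by bend-and-break to a chain $C_1 \cup_p C_2$ with $e_1 + e_2 = e$ and $e_i < e$, and the chain locus is irreducible by the inductive hypothesis applied to $R_{e_1}(X)$ and $R_{e_2}(X)$ together with the irreducibility of the incidence of two curves meeting at a point.

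For (b), the expected fiber dimension of the evaluation $\M_{0,1}(X, e) \to X$ is $e(n+1-d) - 2$, nonnegative precisely when $d \leq n - 1$. Equidimensionality of fibers follows by combining a marked-point variant of the argument in (a) with an analysis of the boundary strata of $\M_{0,1}(X, e)$: reducible stable maps are controlled inductively, and the multiple-cover stratum is expected-dimensional because contracted components add free parameters in a controlled way. Once $\M_{0,1}(X, e)$ is shown to be a local complete intersection of the expected dimension $e(n+1-d) + n - 3$ (hence Cohen--Macaulay), miracle flatness with the smooth target $X$ yields flatness of evaluation.

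The main obstacle is the upper bound on $h^0(C, N_{C/X})$ in (a) when $d$ is close to $n$. Without further hypotheses the bound fails in general because of exotic components arising from curves lying on special subvarieties of $X$ -- for instance high-degree multiple covers or curves sweeping out low-dimensional cones -- and ruling these out uniformly is the essential content of the conjecture. The paper's partial result handles this bound only in the range $e \lesssim \tfrac{2 - \sqrt{2}}{2} n$ at $d = n - 1$, and extending to all $e$, and in particular to the Calabi--Yau case $d = n$ where bend-and-break into a chain containing a line is no longer available, would likely require input beyond the purely geometric approach outlined here.
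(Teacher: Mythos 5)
The statement you are addressing is Conjecture \ref{CHS}, an open conjecture of Coskun, Harris, and Starr; the paper does not prove it, and only establishes the special case recorded in Theorem \ref{RC2} ($d=n-1$ and $e$ roughly at most $\frac{2-\sqrt{2}}{2}n$), building on the Riedl--Yang resolution of the range $d\leq n-2$. Your proposal is therefore necessarily a strategy outline rather than a proof, and to your credit you say so in your final paragraph: the ``main obstacle'' you identify --- a uniform upper bound on the dimension of every component of $R_e(X)$, i.e.\ ruling out superabundant components supported on multiple covers or on special subvarieties of $X$ --- is precisely the content of the conjecture, and you supply no argument for it.

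Beyond that global issue, two specific steps in your sketch would fail as written. First, the incidence-correspondence count (containment of a fixed nondegenerate degree-$e$ rational curve imposes $de+1$ linear conditions on $|\O(d)|$) only controls the piece of $I$ dominating the space of birational maps; it says nothing about components of the fiber over a general $X$ arising from pieces of $I$ whose image in $|\O(d)|$ is small, which is exactly where excess components would live, so ``the generic fiber of $I\to|\O(d)|$ has the expected dimension'' does not follow and is in fact equivalent to the hard part of the claim. Second, your inductive bend-and-break step requires a family of curves of dimension at least $2n-1$ inside a \emph{fixed} hypersurface (cf.\ Lemma \ref{BB}), and for $d$ close to $n$ the expected dimension of $R_e(X)$ is far too small for this in the degree range at issue; the way around this is to work on the universal hypersurface and borrow curves from nearby hypersurfaces, which is the entire content of the ``bend and break in families'' machinery of Section \ref{Flat}, and that machinery in turn needs the codimension bounds on the non-$1$-level locus (Theorem \ref{LT}, Proposition \ref{SS}, Theorem \ref{KT}) to get the induction started. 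None of this appears in your outline, and your irreducibility step likewise glosses over the fact that the space of lines through a point is finite when $d=n-1$, which is the technical obstruction the paper circumvents via conics (Proposition \ref{conic}) and the comb/chain specializations in Theorem \ref{fiber}. In short, your proposal correctly names the known framework and correctly locates the open difficulty, but it does not close the gap, and no complete proof of Conjecture \ref{CHS} exists in this paper or elsewhere.
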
 

There has been progress on Conjecture \ref{CHS} by using induction on $e$ via bend and break \cite{HRS, BM2, RY}. Most recently, Riedl and Yang showed that Conjecture \ref{CHS} holds when $d\leq n-2$ \cite{RY}. For work on rational curves on arbitrary smooth hypersurfaces, see \cite{CS, BV}. 

In this note, we will look at rational curves of low degree on general hypersurfaces of degree $n-1$ in over an algebraically field of characteristic zero. Specifically, we will show
\begin{Thm}
\label{RC2} 
If $X\subset \mb{P}^{n}$ is a general hypersurface of degree $n-1$ for $n\geq 4$, and
\begin{align*}
e<n-\frac{1+\sqrt{n^2-n-15}}{2},
\end{align*}
then the evaluation map $\M_{0,1}(X,e)\to X$ is flat and the Kontsevich space $\M_{0,0}(X,e)$ is a local complete intersection stack of pure dimension $2e+n-4$ and has two components. One of the components consists of $e$ to 1 covers of a line and the other component consists generically of smooth rational curves. 
\end{Thm}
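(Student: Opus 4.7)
The plan is to analyze the universal incidence
\[
\Phi = \{(X, [f]) : f(\mb{P}^1) \subset X\} \subset \mb{P}(V) \times \M_{0,0}(\mb{P}^n, e),
\]
with $V = H^0(\mb{P}^n, \O(n-1))$, via its two projections. The fiber over $X$ is $\M_{0,0}(X, e)$, while the fiber over a stable map $[f]$ is the linear subspace of $\mb{P}(V)$ of hypersurfaces containing $f(\mb{P}^1)$, of codimension equal to the rank $r_f$ of the restriction map $V \to H^0(\mb{P}^1, f^*\O(n-1))$.

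I would stratify $\M_{0,0}(\mb{P}^n, e)$ by pairs $(k, d)$ with $k \leq d \leq e$ and $d \mid e$: the stratum $S_{k, d}$ parameterizes stable maps factoring as $\mb{P}^1 \to C \subset \mb{P}^k \subset \mb{P}^n$, where $C$ is a smooth rational curve of degree $d$ spanning $\mb{P}^k$ and the map $\mb{P}^1 \to C$ has degree $e/d$. A direct count gives $\dim S_{k, d} = (k+1)(n-k+d) + k + 2e/d - 5$, and when $\binom{n-1+k}{k} \geq d(n-1) + 1$ the restriction $V \to H^0(C, \O(n-1))$ is surjective, so $r_f = d(n-1) + 1$. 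Subtracting yields
\[
\dim S_{k, d} - r_f = (2e + n - 4) - (d - k)(n - k) - \frac{2(d - 1)(e - d)}{d}.
\]
Both correction terms are non-negative and vanish jointly only at $(k, d) = (1, 1)$ (covers of lines) and $(k, d) = (e, e)$ (smooth embedded curves spanning $\mb{P}^e$), producing the two expected components of $\M_{0, 0}(X, e)$, each of dimension $2e + n - 4$; other strata with $r_f$ of full rank contribute strictly less.

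The remaining cases are the rank-deficient strata (where $\binom{n-1+k}{k} < d(n-1) + 1$, so $r_f$ is strictly smaller) and the boundary strata (reducible or contracted domains). The rank-deficient case is precisely where the hypothesis $e < n - (1 + \sqrt{n^2 - n - 15})/2$ enters: optimizing $\dim S - r_f$ across the combinatorial range of $(k, d)$ in this regime is what forces the stated quadratic bound. Boundary strata I would handle by induction on $e$: the inductive hypothesis gives $\dim \M_{0,1}(X, e') = 2e' + n - 3$ for $e' < e$, so each boundary divisor $D(e_1, e_2)$ satisfies $\dim D \leq (2e_1 + n - 3) + (2e_2 + n - 3) - (n-1) = 2e + n - 5$ and hence lies in the closure of the two main components.

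Once $\M_{0,0}(X, e)$ is shown to be pure of dimension $2e + n - 4$, the LCI property follows because it is cut out in the smooth stack $\M_{0,0}(\mb{P}^n, e)$ by the $e(n-1) + 1$ coefficients of the pullback of the defining polynomial of $X$, a regular sequence of the correct length. Then $\M_{0,1}(X, e)$ is Cohen--Macaulay of pure dimension $2e + n - 3$, and flatness of the evaluation $\M_{0,1}(X, e) \to X$ follows by miracle flatness, since the target is smooth of dimension $n-1$ and every fiber has the expected dimension $2e - 2$. The principal obstacle is the detailed treatment of the rank-deficient strata, where the precise quadratic bound on $e$ must be extracted from the interplay between the smaller $r_f$ and the dimension of stable maps whose image lies in a small linear subspace.
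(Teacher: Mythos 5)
Your proposal takes a fundamentally different route from the paper --- a single incidence correspondence over $\mb{P}^N$ stratified by the geometry of the image curve, rather than the paper's family bend-and-break followed by specialization --- and it has several genuine gaps. The most serious is irreducibility of the main component. Even granting that the locus $\Phi_{S_{e,e}}\subset\Phi$ is irreducible of the right dimension, an irreducible variety dominating $\mb{P}^N$ need not have irreducible fibers; one only gets equidimensionality generically. Showing that the smooth-rational-curve locus of $\M_{0,0}(X,e)$ is connected for a general $X$ is the heart of the theorem, and the paper spends all of Section~\ref{Irreducible} on it: irreducibility of the scheme of conics through a general point (Proposition~\ref{conic}, which already requires the secant-variety and purity analysis of Lemmas~\ref{determinental} and~\ref{multiplication} and Propositions~\ref{secant}--\ref{codim2}), followed by an inductive argument that specializes chains of lines to combs of lines using Behrend--Manin stacks and the smoothness criterion of Proposition~\ref{smooth}. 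The phrase ``producing the two expected components'' does not follow from the dimension count, and your proposal offers nothing in its place.

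Second, flatness of $\ev\colon\M_{0,1}(X,e)\to X$ requires \emph{every} fiber of $\ev$ to have dimension $2e-2$, not just the fiber over a general $p\in X$; miracle flatness cannot be invoked until that is known. Your incidence computation controls only the generic fiber of $\Phi\to\mb{P}^N$, i.e. the total space $\M_{0,0}(X,e)$ for general $X$, hence at best the fiber of $\ev$ over a general point. Ruling out dimension jumps over special $p$ is precisely the notion of $e$-levelness, and that is what the paper's family bend-and-break (Theorem~\ref{BBF}) is for, bootstrapped from the codimension estimates for non-$1$-level hypersurfaces in Theorem~\ref{LT}, Theorem~\ref{KT} and Proposition~\ref{SS}. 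Finally, your attribution of the quadratic bound to rank-deficient strata misidentifies the source of the constraint: for $k=2$ only $d\in\{1,2\}$ supports a smooth spanning rational curve and $\binom{n+1}{2}\geq 2(n-1)+1$ holds for all $n\geq2$, while for $k\geq 3$ and $d\leq e<n/2$ one has $\binom{n+k-1}{k}\geq d(n-1)+1$ with large margin, so in the theorem's range there is no rank deficiency at all. The bound actually comes from the codimension $\binom{n+1}{2}-3(n-2)$ of hypersurfaces containing a $2$-plane (Theorem~\ref{LT}, from \cite{T}) propagated through bend-and-break increments of size $2n-2e'$ in Corollary~\ref{S1} and Theorem~\ref{BBF}; your approach, which never invokes these inputs, would not recover this threshold.
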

\begin{comment}
To see the e\geq n/4 suffices, use AM-GM to show n^2+2e(e+1)+8\geq (1+4e)n is implied by n^2+2e(e+1)+8\geq (1+4e)^2/2+n^2/2, then expand and conclude
\end{comment}
Another method that has been successful in controlling the dimensions of $R_e(X)$ has been to consider the incidence correspondence between hypersurfaces and rational curves in projective space. In the setting of Conjecture \ref{CHS}, $R_e(X)$ is smooth and of the expected dimension for $e\leq d+2$ by work of Gruson, Lazarsfeld and Peskine \cite{GLP}. Furukawa made this connection explicit and gave a weaker bound that also works in positive characteristic \cite{F}. 

An advantage in considering the Kontsevich space $\M_{0,0}(X,e)$ instead of $R_e(X)$ is the connection with Gromov-Witten invariants. The space $\M_{0,0}(X,e)$ is given by the zero section of a vector bundle on $\M_{0,0}(\mb{P}^n,e)$ and the virtual fundamental class of $\M_{0,0}(X,e)$ is the Euler class \cite[26.1.3]{Hori}. In the cases covered by Theorem \ref{RC2}, the fundamental class of $\M_{0,0}(X,e)$ agrees with the virtual fundamental class as the dimensions agree. Furthermore, it is possible to see that each component of $\M_{0,0}(X,e)$ is generically smooth, as we will identify smooth points parameterizing highly reducible curves in every component in our analysis. In particular by Kleiman-Bertini, given subvarieties $V_1,\ldots, V_r$ of $\mb{P}^n$, then the number of degree $e$ rational curves in $X$ meeting each of $V_1,\ldots,V_r$ is 
\begin{align}
\int_{[\M_{0,r}(X,e)]^{{\rm vir}}}{\ev_1^{*}[V_1']\cap \cdots \cap \ev_r^{*}[V_r']} \label{GWeq}
\end{align}
provided the integrand in \eqref{GWeq} is expected dimension zero and $V_i'$ is a general $PGL_{r+1}$ translate of $V_i$. 

Finally, we remark that hypersurfaces of degree $n-1$ in $\mb{P}^n$ are examples of Fano varieties of pseudo-index 2.  Given a Fano variety $M$, the pseudo-index of $M$ is defined as
\begin{align*}
 \min\{-K_M\cdot C\mid \ C\text{ is a rational curve of }M\}.
\end{align*}
If $f: C\to M$ is a rational curve on $M$ with $-K_M\cdot C=2$ with $f_{*}[C]=\beta$, then the expected dimension of $\M_{0,0}(M,r\beta)$ is the same as the expected dimension of $r$-fold covers of stable maps in $\M_{0,0}(M,\beta)$. From this point of view, it is clear why the two components given in Theorem \ref{RC2} are necessary. See \cite{Castravet} for another example of a Fano variety of pseudoindex 2 and a description of the components of its space of rational curves. 
%Therefore, the content of Theorem \ref{RC1} is the connectedness of $R_e(X)$. As a product of the bend and break methods, we will also see that the space of rational curves through each point of the general hypersurface is also the expected dimension, which is equivalent to flatness of the evaluation map. 

\subsection{Methods}
In order to apply bend and break in families as in \cite{RY}, we will need to apply a couple of results from the author \cite{T} to control the locus of hypersurfaces with more lines through a point than expected and the locus of hypersurfaces with positive dimensional singular loci. Also, the space of lines through a point is expected to be finite, and so in particular is not irreducible. This will present a technical obstacle in showing irreducibility of the main component in Theorem \ref{RC2}. To deal with this, we will show that the space of conics through a general point is irreducible, and then use an argument that is similar in spirit to the irreducibility argument in \cite{HRS} but will require us to specialize further. We work in characteristic zero, but it seems likely that the techniques extend to positive characteristic. 

\subsection{Outline}
The argument will have two parts. In Section \ref{Flat}, we will show the fibers of the evaluation map $\M_{0,1}(X,e)\to X$ have the expected dimension in Theorem \ref{FEV}. We will look at the irreducible components of the general fiber in Theorem \ref{fiber} in Section \ref{Irreducible}. Theorem \ref{RC2} will follow from Theorems \ref{FEV} and \ref{fiber}. 

In Section \ref{Flat}, we are mostly interested in dimension, so it suffices to work with the coarse moduli space of $\M_{0,1}(X,e)$, but we will need look at smoothness in Section \ref{Irreducible}, so we will need to work with $\M_{0,1}(X,e)$ as a stack. In general, the fact that the Kontsevich space $\M_{0,1}(X,e)$ and, more generally, the Behrend-Manin stacks \cite{BM} are Deligne-Mumford stacks in characteristic zero will allow us to avoid technical difficulties with stacks by passing to an \'etale cover. 

\section{Acknowledgements}
The author would like to thank Jason Starr for help in identifying the irreducible components in Theorem \ref{RC2} and Eric Riedl and David Yang for helpful conversations. The author would also like to thank the referee for detailed suggestions on how to improve the exposition and for the additional references to the literature. 

%\tableofcontents
\section{Conventions}
%Question to self: where do we use characteristic 0??
Throughout the paper, we will work over an algebraically closed field of characteristic zero. We will let
\begin{enumerate}
\item
$X\subset \mb{P}^n$ be a hypersurface of degree $2\leq d\leq n-1$
\item
$N=\binom{d+n}{n}-1$ so $\mb{P}^N$ parameterizes hypersurfaces of degree $d$ in $\mb{P}^n$
\item
$\mc{X}\to \mb{P}^N$ be the universal hypersurface $\mc{X}=\{(p,X)|p\in X\}\subset \mb{P}^n\times \mb{P}^N$
\item
$\M_{0,r}(X,e)$ is the Kontsevich space parameterizing stable maps $C\to X$ of degree $e$, where $C$ is a genus $0$ curve with $r$ marked points
\item
$\M_{0,r}(\mc{X}/\mb{P}^N,e)$ is the relative Kontsevich space parameterizing stable maps mapping into the fibers of $\mc{X}\to \mb{P}^N$.
%\item
%$\ol{M}_{0,r}(X,e)$ and $\ol{M}_{0,r}(\mc{X}/\mb{P}^N,e)$ are the coarse moduli spaces of $\M_{0,r}(X,e)$ and $\M_{0,r}(\mc{X}/\mb{P}^N,e)$ above. 
\end{enumerate}
We are interested in when $d=n-1$. 

\section{Flatness}
\label{Flat}
The goal of the first half of the note is to prove
\begin{Thm}
\label{FEV}
%If $n^2-(1+4e)n+2e(e+1)+8>0$ and $e\leq n$, 
If $e<n-\frac{1+\sqrt{n^2-n-15}}{2}$, then for a general hypersurface $X$ of degree $d=n-1$ in $\mb{P}^n$, the evaluation map
\begin{align*}
\M_{0,1}(X,e)\to X
\end{align*}
is flat and $\M_{0,1}(X,e)$ is a local complete intersection stack. 
\end{Thm}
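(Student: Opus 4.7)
The plan is to reduce Theorem~\ref{FEV} to a pure dimension bound on the fibers of $\ev$. The Kontsevich stack $\M_{0,1}(X,e)$ sits inside the smooth Deligne--Mumford stack $\M_{0,1}(\mb{P}^n,e)$ as the zero locus of a section of a rank $(n-1)e+1$ vector bundle, namely the pushforward along the universal curve of the pullback of $\O_{\mb{P}^n}(n-1)$. Hence every irreducible component of $\M_{0,1}(X,e)$ has dimension at least the expected $2e+n-3$, and equality dimension-wise automatically upgrades the Kontsevich stack to a local complete intersection. Since $X$ is smooth, by miracle flatness it then suffices to show that
\begin{align*}
\dim \ev^{-1}(p)\leq 2e-2 \quad \text{for every } p\in X.
\end{align*}
The rest of the argument is devoted to this inequality.

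I would prove the fiber dimension bound by induction on $e$. The base case $e=1$ asserts that a general hypersurface of degree $n-1$ has only finitely many lines through each of its points; this follows from the author's bounds in \cite{T} on the locus of hypersurfaces with positive-dimensional scheme of lines through a given point. For the inductive step, assume the bound for all $e'<e$ and suppose for contradiction that for a general $X$ some $p\in X$ has $\dim \ev^{-1}(p)\geq 2e-1$. Stratify $\ev^{-1}(p)$ by the combinatorial type of the dual graph of the stable map. For a boundary stratum of type $e=e_1+e_2$ with the marked point on the degree-$e_1$ component, the inductive hypothesis combined with the codimension $n-1$ condition that the attaching nodes match in $X$ gives
\begin{align*}
\dim \leq (2e_1-1)+(2e_2+n-3)-(n-1) = 2e-3,
\end{align*}
and analogous bounds hold for dual graphs with more components. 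Consequently, the open stratum parameterizing stable maps with irreducible source $\mb{P}^1$ alone must contribute dimension at least $2e-1$.

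The remaining step is to apply bend and break in the universal family $\M_{0,1}(\mc{X}/\mb{P}^N,e)\to \mc{X}$ in order to degenerate an element of this open stratum, landing back in a boundary stratum of the type already bounded and producing the contradiction. The main obstacle is that in the range $e<n-(1+\sqrt{n^2-n-15})/2$ we have $-K_X\cdot f=2e<n+1$, so Mori's classical bend and break on a single fixed $X$ is not directly available. One must instead vary $X\in \mb{P}^N$ and exploit the excess dimension coming from the $N$-parameter family of hypersurfaces, together with the \cite{T} bound on the locus of hypersurfaces with positive-dimensional singular locus, to force a degeneration; the precise numerical threshold in the theorem presumably arises from optimizing this bend-and-break input across all two-component splits $e=e_1+e_2$ of $e$.
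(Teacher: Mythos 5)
Your opening reduction — realize $\M_{0,1}(X,e)$ as the zero locus of a section of a vector bundle of rank $(n-1)e+1$ on the smooth stack $\M_{0,1}(\mb{P}^n,e)$, so that expected fiber dimension implies both lci and flatness via miracle flatness — is exactly the reduction the paper makes by citing \cite[Lemma 4.5]{HRS}, and your boundary-stratum dimension count $(2e_1-1)+(2e_2+n-3)-(n-1)=2e-3$ is the standard one. The trouble is everything after that: you have correctly identified \emph{that} bend and break in families is the crux, but you have not supplied it, and your guess about where the numerical threshold comes from is wrong.

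The paper does not argue by contradiction on a single general $X$. It works in the parameter space $\mb{P}^N$ of all degree-$(n-1)$ hypersurfaces, builds the filtration $S_1\subset S_2\subset\cdots\subset\mb{P}^N$ of loci of hypersurfaces failing the fiber-dimension bound at some level $\leq e$ (``$e$-level''), and proves two independent estimates: (i) $\operatorname{codim}(S_1)\geq \binom{n+1}{2}-3(n-2)$, using the results on lines through a point, on $2$-planes in hypersurfaces, and on positive-dimensional singular loci from \cite{T} and \cite{RY} (Corollary \ref{S1}); and (ii) $\operatorname{codim}(S_{e-1}\subset S_e)\leq 2n-2e$ (Theorem \ref{BBF}). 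The threshold $e<n-\frac{1+\sqrt{n^2-n-15}}{2}$ is precisely the condition that $\binom{n+1}{2}-3(n-2)-\sum_{e'=2}^{e}(2n-2e')>0$, i.e.\ that the accumulated codimension drops never exhaust the initial codimension of $S_1$. It has nothing to do with ``optimizing over two-component splits $e=e_1+e_2$'' — the bound $2n-2e$ per step is a single bend-and-break estimate, not a max over partitions.

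The reason one cannot work on a single general $X$ the way you propose is that bend and break requires a $(2n-1)$-dimensional family of curves, and the fiber $\ev^{-1}(p)\subset\M_{0,1}(X,e)$ on a fixed $X$ has no reason to be large enough; you must borrow curves from nearby hypersurfaces in $\mb{P}^N$. But once you slice $\mb{P}^N$ by hyperplanes to arrange bend and break (this is Lemma \ref{OL} in the paper), the hypersurfaces you land on are no longer general and may well be singular. This is why the paper introduces $e$-levelness with a relaxed bound at singular points: after slicing, one needs to know the sliced locus misses $S_{e-1}$ entirely, and this is exactly what the codimension accounting (i) and (ii) delivers. Your sketch neither formulates the levelness condition nor performs the Theorem \ref{BBF} computation — cutting a component $\mc{A}\subset\M_{0,1}(\mc{X}/\mb{P}^N,e)$ with excess fiber dimension down to image dimension $2n-1$ in $\M_{0,0}(\mb{P}^n,e)$, invoking bend and break, and using \cite[Proposition 5.5]{RY} and Proposition \ref{CD1} to contradict the codimension of the reducible locus. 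Those are the actual content of the proof and are absent here.
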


\subsection{Definitions}
\begin{comment}
\begin{Def}
Given a projective scheme $X$, let $\M_{0,r}(X,e)$ denote the Kontsevich space parameterizing stable maps $C\to X$ of degree $e$, where $C$ is a genus 0 curve with $r$ marked points. 
\end{Def}
%We will work with $\M_{0,a}(X,e)$ as a coarse moduli space unless otherwise stated. 
Given a point $p\in X$, there is an evaluation map $\ev: \M_{0,1}(X,e)\to X$, and we will refer to $\ev^{-1}(p)$ loosely as the rational curves through $p$. We will denote the boundary of $\M_{0,1}(X,e)$, consisting of reducible curves, as $\partial \M_{0,1}(X,e)$. 

\begin{Def}
Given a projective morphism $\mc{X}\to S$, let $\M_{0,a}(\mc{X}/S,e)\to S$ be the relative Kontsevich space parameterizing stable maps mapping into the fibers of $\mc{X}\to S$. 
\end{Def}
%google Higher Fano manifolds and rational surfaces and search for relative kontsevich space
In our case, $X$ will be a hypersurface and $\mc{X}\to S$ will be the universal hypersurface. For the rest of this section, we let $X\subset \mb{P}^n$ be a hypersurface of degree $d$ for $d\leq n-1$. 
\end{comment}

We recall the notion of $e$-level \cite{RY}, with some slight modifications. 
\begin{Def}
Given a point $p\in X\subset \mb{P}^n$ of a hypersurface of degree $d$, $p$ is called $e$-\emph{level} if one of the following holds:
\begin{enumerate}
\item
$p$ is a smooth point and the space of degree $e$ rational curves through $p$ has dimension at most $e(n-d+1)-2$
\item
$p$ is a singular point and the space of degree $e$ rational curves through $p$ has dimension at most $e(n-d+1)-1$.
\end{enumerate}
\end{Def}

By the space of rational cuves through $p$, we mean the fiber of the evaluation map $\M_{0,1}(X,e)\to X$ over $p\in X$. When $p$ fails to be $e$-level, the space of rational curves through $p$ has larger dimension than expected. The notion of $e$-levelness extends the notion of flatness of the evaluation map to singular points in a manner that allows us bound the locus of points that are not $e$-level. 
%Note that because the definition of $e$-level treats singular and nonsingular points separately, the locus of level points is not closed or open in the universal hypersurfaces. However, it is open if we restrict to either the locus of smooth points or the locus of singular points. To see why we need to treat singular points separately, it suffices to consider the space of lines through $p$, where being singular at $p$ means the linear term in the Taylor expansion of the form defining $X$ at $p$ vanishes and hence does not impose a condition on the space of lines through $p$. 

\begin{Def}
A hypersurface $X$ is $e$-\emph{level} if,
\begin{enumerate}
\item
the singular locus is finite, and
\item
every point of $X$ is $k$-level for all $k\leq e$. 
\end{enumerate}
\end{Def}
\begin{Rem}
\label{lremark}
Instead of requiring only finitely many singular points, what is actually being used in the dimension counts of \cite[Proposition 2.5]{CS} \cite[Proposition 5.5]{RY} is that there is no rational curve $C\subset X$ of degree less than $e$, for which the space of rational curves of degree $k<e$ through every point $p\in C$ has dimension exceeding $k(n-d+1)-2$. For example, the original definition of $e$-level \cite{RY} replaced the condition of finitely many singular points with the condition that there is no rational curve of degree at most $e$ in the singular locus. 
\end{Rem}

\subsection{Basic lemmas}
We collect here some crucial facts needed to run the argument. %In this section, $X\subset \mb{P}^n$ will be a hypersurface of degree $d$.

\begin{Prop}
\label{CD1}
For any map $\phi: T\to \M_{0,r}(X,e)$ from an irreducible scheme $T$, the pullback $\phi^{-1}(\partial \M_{0,r}(X,e))\subset T$ is empty or has codimension at most 1. 
\end{Prop}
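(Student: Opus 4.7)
The plan is to show that $\partial \M_{0,r}(X,e)$ is \'etale-locally cut out by a single function in $\M_{0,r}(X,e)$; the codimension bound then follows immediately via Krull's Hauptidealsatz applied to the pullback of that function along $\phi$.

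At any boundary point $[f\colon C\to X]$ with nodes $p_1,\ldots,p_k$ on the domain curve $C$, I would first work inside the smooth Deligne-Mumford stack $\M_{0,r}(\mb{P}^n,e)$. A standard analysis of the deformation theory of stable maps shows that each node $p_i$ contributes an independent smoothing parameter $t_i$ in a versal deformation, so that the boundary of $\M_{0,r}(\mb{P}^n,e)$ is \'etale-locally the simple normal crossings divisor cut out by the product $t_1\cdots t_k$. Since $\M_{0,r}(X,e)\hookrightarrow \M_{0,r}(\mb{P}^n,e)$ is a closed substack cut out by a section of a vector bundle whose defining equations encode factorization through $X$ and do not interact with the smoothing parameters, the restriction of $t_1\cdots t_k$ still cuts out the boundary in $\M_{0,r}(X,e)$ locally. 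Hence $\partial \M_{0,r}(X,e)$ is \'etale-locally principal.

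With this in hand, fix $\phi\colon T\to \M_{0,r}(X,e)$ with $T$ irreducible and take any closed point $t\in \phi^{-1}(\partial \M_{0,r}(X,e))$. On an \'etale neighborhood $U$ of $\phi(t)$ the boundary is cut out by a single function $g$, so $\phi^{-1}(\partial \M_{0,r}(X,e))$ is \'etale-locally near $t$ the vanishing locus of $\phi^{*}g$. If $\phi^{*}g$ is identically zero on some open neighborhood of $t$, then irreducibility of $T$ forces $\phi^{-1}(\partial \M_{0,r}(X,e))=T$, giving codimension $0$; otherwise $\phi^{*}g$ is a non-zero-divisor at $t$, and Krull's principal ideal theorem gives codimension exactly $1$ at $t$. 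In either case the codimension is at most $1$, as desired.

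The main technical step is the \'etale-local principal description of the boundary, but I expect this to pose no serious obstacle. The smoothing-parameter description of nodes is classical and is part of the standard deformation theory of stable maps (e.g.\ the Behrend-Manin framework in \cite{BM} cited later). Moreover, the outline at the end of Section 1 already notes that the Deligne-Mumford property in characteristic zero lets us freely pass to \'etale covers, so the stack-theoretic formulation introduces no additional trouble.
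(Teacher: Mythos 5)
Your proof is correct and follows essentially the same route as the paper: both reduce the claim to the fact that $\partial\M_{0,r}(\mb{P}^n,e)$ is a locally principal (normal-crossings) divisor in the smooth Deligne--Mumford stack $\M_{0,r}(\mb{P}^n,e)$, intersect with $\M_{0,r}(X,e)$, pull back along $\phi$, and apply Krull's principal ideal theorem on the irreducible $T$. The only difference is that the paper simply cites Fulton--Pandharipande \cite[Theorem 3]{FP} for the divisor structure of the boundary, whereas you re-derive it from the deformation theory of nodes; also note that the middle step about the equations for $X$ ``not interacting with the smoothing parameters'' is unnecessary, since $\partial\M_{0,r}(X,e)$ is by definition the set-theoretic intersection $\M_{0,r}(X,e)\cap\partial\M_{0,r}(\mb{P}^n,e)$, so the restriction of the local equation automatically cuts it out.
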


\begin{proof}
This follows from the fact that $\partial\M_{0,0}(\mb{P}^n,e)$ is a divisor in $\M_{0,0}(\mb{P}^n,e)$ \cite[Theorem 3]{FP}. 
\begin{comment} 
Explicitly, consider an etale cover $\pi: Y\to \M_{0,0}(\mb{P}^n,e)$ by a scheme and note that $\partial\M_{0,0}(\mb{P}^n,e)$ pulls back to a divisor on the scheme $T\times_{\M_{0,0}(\mb{P}^n,e)}Y$ in the fiber diagrams below. 
\begin{center}
\begin{tikzcd}
T\times_{\M_{0,0}(\mb{P}^n,e)}Y \arrow[d] \arrow[r]  & \pi^{-1}(\M_{0,r}(X,e)) \arrow[r, hook] \arrow[d] & Y \arrow[d, "\pi"]\\
T \arrow[r, "\phi"] & \M_{0,r}(X,e) \arrow[r, hook] & \M_{0,0}(\mb{P}^n,e)
\end{tikzcd}
\end{center}
\end{comment}
\end{proof}
\begin{comment}
\begin{Prop}
\label{CD1}
For any map $\phi: T\to \ol{M}_{0,r}(X,e)$ from an irreducible scheme $T$, the pullback $\phi^{-1}(\partial \ol{M}_{0,r}(X,e))\subset T$ has codimension at most 1. 
\end{Prop}

\begin{proof}
This is because the boundary of $\ol{M}_{0,r}(\mb{P}^n,e)$ is a divisor \cite[Section 6.1]{FP}. 
\end{proof}
\end{comment}

We will also need a version of bend and break. 
\begin{Lem} \cite[Corollary 3.3]{RY}
\label{BB}
If $T$ is a closed locus in $\M_{0,0}(\mb{P}^n,e)$ of dimension at least $2n-1$, then $T$ contains maps with reducible domains. 
\end{Lem}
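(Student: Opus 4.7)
The plan is to apply Mori's classical bend and break argument after a dimension count. Argue by contradiction: assume every stable map parameterized by $T$ has irreducible domain. After replacing $T$ by an irreducible component, we may take $T$ irreducible.

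Form the preimage $T' \subset \M_{0,2}(\mb{P}^n,e)$ of $T$ under the forgetful morphism $\pi\colon \M_{0,2}(\mb{P}^n,e) \to \M_{0,0}(\mb{P}^n,e)$. Over the locus of stable maps with irreducible domain, $\pi$ has relative dimension $2$: the fiber over such a map $f$ is essentially the square of its $\mb{P}^1$ domain modulo the finite group $\on{Aut}(f)$. Hence $\dim T' \geq (2n-1) + 2 = 2n+1$. Since the target of the two-point evaluation $\ev_1\times \ev_2\colon T' \to \mb{P}^n\times \mb{P}^n$ has dimension $2n < 2n+1$, a general fiber of this evaluation has dimension at least $1$. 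Choose a general image point $(p,q)$ and an irreducible curve $B$ in its fiber; after normalizing and compactifying, obtain a smooth proper curve $\overline{B}$ parameterizing a nonconstant one-parameter family of stable maps $\mb{P}^1\to \mb{P}^n$ of degree $e$, all sending two fixed marked points to $p$ and $q$ respectively.

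Now invoke Mori's bend and break lemma: a nontrivial one-parameter family of rational curves of fixed degree in projective space through two fixed points cannot consist entirely of irreducible maps—some member of the family must have reducible domain. This contradicts the assumption on $T$, completing the proof. The main point to verify is that the family over $\overline{B}$ genuinely varies rather than collapsing to a single map; this follows because $B\subset T'$ is a positive-dimensional subset of the moduli space, so its points correspond to inequivalent two-pointed stable maps, and once this is established the classical intersection-theoretic bend and break argument applies directly in the proper Kontsevich space $\M_{0,2}(\mb{P}^n,e)$.
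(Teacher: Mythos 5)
The paper itself gives no proof for this lemma; it simply cites \cite[Corollary 3.3]{RY}. Your proposal reconstructs the standard bend-and-break argument, and it is essentially correct and is the expected proof in this setting. A couple of points worth tightening: (i) the fiber of the forgetful map over an irreducible-domain point is not literally $(\mb{P}^1)^2/\mathrm{Aut}(f)$ but a modification of it (a blow-up along the diagonal, accounting for bubbling when the two marks collide), though of course the dimension count of $2$ is correct; (ii) you should note explicitly that the image of $\ev_1\times\ev_2$ restricted to $T'$ is not contained in the diagonal $\Delta\subset\mb{P}^n\times\mb{P}^n$ (because for an irreducible-domain degree-$e$ map, two general distinct points of $\mb{P}^1$ have distinct images), so that a general fiber lies over a pair $p\neq q$. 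This observation does double duty: it is what allows you to invoke Mori's two-point bend and break, and it also guarantees that the fiber of $\ev_1\times\ev_2$ over $(p,q)$ avoids the boundary of $T'$ consisting of maps whose two marks sit on a contracted bubble (those necessarily have $\ev_1=\ev_2$), so that every member of the complete family over $\overline{B}$ actually lies in the irreducible-domain locus. With that in place, your observation that positive-dimensionality in the moduli stack forces genuine variation, and hence that the forgetful image in $\M_{0,0}(\mb{P}^n,e)$ is non-constant (since $\ev_1\times\ev_2$ is finite on each forgetful fiber over an irreducible-domain point), completes the contradiction with bend and break as you describe.
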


\subsection{Codimension of the locus of hypersurfaces that are not 1-level}

As described in \cite{RY}, the idea of the argument is to borrow rational curves from nearby hypersurfaces to apply bend and break. To run the argument, we need to know the locus of hypersurfaces that are not $1$-level has high codimension. For the rest of this section, let $\mb{P}^N$ be the space of all hypersurfaces of degree $d$ in $\mb{P}^n$. We are primarily interested in the case where $d=n-1$. 

\begin{Thm}
[{\cite{T}}] 
\label{LT}
Let $U\subset \mb{P}^N$ be the open locus of smooth hypersurfaces. For $4\leq d=n-1$, a largest component of the closed locus $Z\subset U$ of hypersurfaces that are not 1-level consists of hypersurfaces containing a 2-plane. 
\end{Thm}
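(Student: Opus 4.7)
The plan is to stratify $Z$ by the geometric origin of the failure to be $1$-level. Since $X \in U$ is smooth, non-$1$-levelness means that at some point $p \in X$ the subscheme of lines on $X$ through $p$ is positive-dimensional. Choosing an irreducible positive-dimensional component of this family, the union of those lines is a surface $S \subset X$, which must be a cone with vertex $p$ over some curve $C$ lying in a hyperplane $\mb{P}^{n-1}$ not containing $p$. Writing $k = \deg C = \deg S \geq 1$, I stratify $Z$ by $k$: the stratum $Z_1$ consists of hypersurfaces containing a $2$-plane, and the theorem reduces to the inequality
\[
\codim_{\mb{P}^N}(Z_k) \;>\; \codim_{\mb{P}^N}(Z_1) \quad \text{for every } k \geq 2.
\]

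For $k=1$ the count is immediate: the Grassmannian of $2$-planes in $\mb{P}^n$ has dimension $3(n-2)$, and the condition $\Pi \subset X$ imposes $\binom{n+1}{2}$ independent linear conditions when $d = n-1$, so
\[
\codim_{\mb{P}^N}(Z_1) \;=\; \binom{n+1}{2} - 3(n-2) \;=\; \tfrac{1}{2}(n^2 - 5n + 12).
\]
For $k \geq 2$ I bound $\codim_{\mb{P}^N}(Z_k)$ from below by parametrizing triples $(X, p, C)$. Placing $p$ at the affine origin, the containment $\mathrm{Cone}_p(C) \subset X$ is equivalent to the condition that each homogeneous component $F_i$ of the defining polynomial of $X$ vanish on $C$, which contributes $\sum_{i=1}^{d} h_C(i)$ independent linear conditions, where $h_C$ is the Hilbert function of $C \subset \mb{P}^{n-1}$. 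Meanwhile the $(p, C)$-parameter space has dimension at most $n$ plus the dimension of the relevant component of the Hilbert scheme of degree-$k$ curves in $\mb{P}^{n-1}$.

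The main obstacle is producing a uniform lower bound on $\codim(Z_k)$ over all $k \geq 2$ and all components of the Hilbert scheme. One must handle reducible and non-reduced $C$ (unions of lines, multiple lines, plane curves of higher degree, etc.) and carefully balance the Hilbert scheme dimensions against the growth of $h_C(i)$. The natural tool is a Castelnuovo-type lower bound on $h_C(i)$ in terms of $k$ and the dimension of the linear span of $C$, together with standard dimension bounds for the Hilbert scheme component through $C$. Once these estimates are uniform, routine arithmetic comparing the two sides yields $\codim(Z_k) > \codim(Z_1)$ for every $k \geq 2$, identifying the $2$-plane locus as the unique largest component of $Z$.
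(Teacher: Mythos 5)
This theorem is imported from \cite{T} rather than proved in the present paper, so there is no internal argument to compare against; I can only assess whether your outline is a viable route to the cited result.

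Your general framework --- stratifying $Z$ by the triples $(X,p,C)$ where the positive-dimensional family of lines through the smooth point $p$ sweeps out a cone over a degree-$k$ curve $C$, then balancing the number of linear conditions (governed by the Hilbert function of $C$) against the dimension of the parameter space of $(p,C)$ --- is indeed the natural incidence-variety approach, and your codimension count for $k=1$ is correct. However two things need to be said. First, the entire content of the theorem is precisely the ``routine arithmetic'' you defer: the uniform estimates over all $k\geq 2$ and over every component of the Hilbert scheme (reducible, non-reduced, degenerate $C$) are the work, and you have not carried them out. In particular, a Castelnuovo-type lower bound on $h_C(i)$ degrades sharply when $C$ is a plane curve or has small linear span, and these are exactly the dangerous strata, so the claim does not follow by generalities. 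Second, your asserted strict inequality $\codim(Z_k) > \codim(Z_1)$ for all $k\geq 2$ cannot be right at the boundary case $d=4$: the paper explicitly notes that the largest component of $Z$ is only \emph{unique} when $d>4$, so for $d=4$ there is a tie and the inequality must fail for some $k\geq 2$. Relatedly, you should record that if $X$ is smooth at $p$ then $C$ is forced to lie in the projectivized tangent hyperplane $\mb{P}(T_pX)\cong\mb{P}^{n-2}$, not in an arbitrary $\mb{P}^{n-1}$; this does not change the $k=1$ answer but does tighten the $k\geq 2$ estimates in a way your sketch does not account for.
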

We will not need this but the largest component is unique when $n-1=d>4$. %In the cases where $n-1=d< 4$ or $d$ is less than the dimension of the hypersurface, the largest component consists of hypersurfaces where the second fundamental form vanishes at a point. 

We also need to consider hypersurfaces with a larger dimensional family of lines through a singular point than expected.
\begin{Prop}
\label{SS}
Let $U\subset \mb{P}^N$ be the open locus of hypersurfaces with at most finitely many singular points. Let $Z\subset \mb{P}^N$ be the locus of hypersurfaces $X$ for which there exists a singular point $p$ containing an $(n-d+1)$-dimensional family of lines in $X$. For $d\leq n-1$, the codimension of $Z$ in $\mb{P}^N$ is at least $\binom{n+1}{2}$. 
\end{Prop}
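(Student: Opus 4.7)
The plan is to bound $\codim_{\mb{P}^N} Z$ by a fiber-wise argument. Form the incidence
\begin{align*}
\tilde Z = \{(X, p) \in U \times \mb{P}^n : p \in X_{\mathrm{sing}},\ \dim F_1(X,p) \geq n-d+1\},
\end{align*}
where $F_1(X,p)$ denotes the scheme of lines through $p$ contained in $X$. Since $Z$ is the image of $\tilde Z$ under projection to $\mb{P}^N$ and the other projection $\tilde Z \to \mb{P}^n$ has constant-dimensional fibers by $PGL_{n+1}$-equivariance, it suffices to show that for a fixed $p$, the fiber $V_p \subset \mb{P}^N$ has codimension at least $\binom{n+1}{2}+n$, which would imply $\codim Z \geq \binom{n+1}{2}$.

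Fix $p = [1:0:\cdots:0]$ and write $f = \sum_{i=0}^{d} x_0^{d-i} g_i(x_1,\ldots,x_n)$ with $g_i$ a form of degree $i$. The condition $p \in X_{\mathrm{sing}}$ is $g_0 = g_1 = 0$, contributing codimension $n+1$, and in the residual parameter space $\prod_{i=2}^{d} H^0(\mb{P}^{n-1}, \mc{O}(i))$, the lines through $p$ in $X$ are cut out by $V(g_2,\ldots,g_d) \subset \mb{P}^{n-1}$, of expected dimension $n-d$. So the content of the proof is to show the locus
\begin{align*}
W = \{(g_2,\ldots,g_d) : \dim V(g_2,\ldots,g_d) \geq n-d+1\}
\end{align*}
has codimension at least $\binom{n+1}{2}-1$ in $\prod H^0(\mc{O}(i))$.

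The extremal stratum is $\{g_2 \equiv 0\} \subset W$, of codimension exactly $\binom{n+1}{2}$: these are the hypersurfaces of multiplicity at least $3$ at $p$, and $V(g_3,\ldots,g_d)$ automatically has expected dimension $n-d+1$. I would then argue by induction on the number of forms that every other irreducible component of $W$ has codimension at least $\binom{n+1}{2}$. On such a component where $g_2 \not\equiv 0$ generically, the ideal of a generic complete intersection $V(g_3,\ldots,g_d)$ of degrees $3,4,\ldots,d$ has no degree-$2$ part, so $V(g_2) \supset V(g_3,\ldots,g_d)$ forces $g_2 \equiv 0$. Hence the only alternatives are (a) $\dim V(g_3,\ldots,g_d) \geq n-d+2$ — treated by the inductive hypothesis, giving codimension at least $\binom{n+2}{3} > \binom{n+1}{2}$ — or (b) $V(g_3,\ldots,g_d)$ is reducible with an $(n-d+1)$-dimensional component $Y$ contained in $V(g_2)$.

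The main technical obstacle is case (b). I plan to stratify by the Hilbert polynomial of $Y$. For $Y$ a linear $\mb{P}^{n-d+1}$, the restrictions $g_i|_Y \equiv 0$ for $i \in \{2,\ldots,d\}$ impose $\sum_{i=2}^d \binom{n-d+1+i}{i}$ independent conditions, and after subtracting the Grassmannian dimension $(n-d+2)(d-2)$, a direct binomial computation shows the resulting codimension exceeds $\binom{n+1}{2}$ whenever $d \leq n-1$ with $n\geq 4$. For higher-degree $Y$, the number of restriction conditions $\sum h^0(Y, \mc{O}(i))$ grows at least linearly with $\deg Y$ while the corresponding Hilbert scheme components do not grow as fast, yielding the same bound. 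Combining these strata gives $\codim W \geq \binom{n+1}{2}$, and hence $\codim_{\mb{P}^N} Z \geq \binom{n+1}{2}$.
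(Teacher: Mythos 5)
The paper disposes of this proposition in one line by citing \cite[Proposition 5.10]{RY} and noting that the analysis of the singular-point case there extends to $d = n-1$; it does not give a self-contained argument. Your proposal, by contrast, attempts a direct fiber-wise proof over $p \in \mb{P}^n$, which is essentially the same shape as the Riedl--Yang argument you would find if you unwound the citation. The reduction via the incidence $\tilde Z$, the normalization $p=[1:0:\cdots:0]$, the $n+1$ conditions $g_0=g_1=0$, and the observation that the Fano scheme of lines through $p$ is $V(g_2,\ldots,g_d)\subset\mb{P}^{n-1}$ of expected dimension $n-d$ are all correctly set up, and the stratum $g_2\equiv 0$ and the linear-$Y$ stratum are dimension-counted correctly.

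However, there is a genuine gap at precisely the point where the real work happens, namely your case (b): $V(g_3,\ldots,g_d)$ has the expected dimension $n-d+1$ but is reducible, and $g_2$ vanishes on some component $Y$ that is not a linear space. The claim that ``the number of restriction conditions $\sum_i h^0(Y,\mc{O}(i))$ grows at least linearly with $\deg Y$ while the corresponding Hilbert scheme components do not grow as fast'' is a bare assertion, not a proof, and it is exactly the sort of comparison that is delicate: Hilbert schemes of positive-dimensional subvarieties of $\mb{P}^{n-1}$ are not controlled by degree alone, and one needs a uniform argument (e.g.\ restriction to a general linear section, or a monomial degeneration/Macaulay bound) to make it precise. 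You also cannot simply stratify by Hilbert polynomial without bounding the number of strata or exhibiting a uniform estimate, and the inductive step in case (a) (where you invoke ``the inductive hypothesis'' to get $\binom{n+2}{3}$) is not actually set up --- you would need to state and prove a statement for each $k$ that the locus where $\dim V(g_k,\ldots,g_d)\geq n-d+k-1$ has codimension at least $\binom{n+k-1}{k}$, and check the base case $k=d$ and that the induction closes. In its current form the sketch identifies the right strata but does not bound them, so it does not constitute a proof; it reproduces the setup of \cite[Proposition 5.10]{RY} without the codimension accounting that makes that argument go through.
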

\begin{proof}
This is proven in \cite[Proposition 5.10]{RY}. Even though they assume $d\leq n-2$, the analysis for the case of a singular point goes through. The main obstruction to extending \cite[Proposition 5.10]{RY} to the case where $d=n-1$ (case of smooth points) is covered by Theorem \ref{LT}. 
\end{proof}

To prove a hypersurface is $e$-level, we need to rule out a positive dimensional singular locus. 
\begin{Thm}
[{\cite{T}}] 
\label{KT}
For $d\geq 7$, the unique largest component of the closed locus $Z\subset \mb{P}^N$ of hypersurfaces with positive dimensional singular locus consists of hypersurfaces singular along a line. 
\end{Thm}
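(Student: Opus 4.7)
My plan is to compare codimensions in $\mb{P}^N$ directly: compute $\codim W_\ell$ explicitly, where $W_\ell\subset Z$ is the locus of hypersurfaces singular along some line, and then show every other irreducible component of $Z$ has strictly larger codimension via a stratification of positive-dimensional singular loci. For $W_\ell$, I would use the incidence variety $\mc{I}=\{(F,\ell)\in\mb{P}^N\times G(1,n) : F\in I_\ell^2\}$. Fixing coordinates so $\ell=\{x_2=\cdots=x_n=0\}$, a direct monomial count shows that $I_\ell^2\cap H^0(\O(d))$ has codimension $nd+1$ in $H^0(\O(d))$: the quotient is spanned by the $d+1$ monomials $x_0^a x_1^b$ and the $(n-1)d$ monomials $x_0^a x_1^b x_i$ with $i\geq 2$. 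Hence $\dim\mc{I}=N-(nd+1)+(2n-2)$; producing one explicit $F$ whose reduced positive-dimensional singular locus is exactly $\ell$ shows the first projection is generically finite onto $W_\ell$, so $\codim W_\ell = nd-2n+3$.

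For any other irreducible component $Z'\neq W_\ell$, a general $X\in Z'$ contains in $X_{\text{sing}}$ an irreducible reduced subvariety $C$ which is either (a) a curve of degree $k\geq 2$ or (b) of dimension at least $2$. I stratify $Z\setminus W_\ell$ by the Hilbert polynomial $P$ of such a $C$; each stratum has dimension at most $\dim\Hilb^P(\mb{P}^n)+\dim\{F : F\in I_C^2\}$ for a fixed $C$ with $P_C=P$. For fixed smooth $C$, the codimension of $\{F\in I_C^2\}$ is computed from the filtration of $\O(d)/I_C^2(d)$ whose graded pieces are $\O_C(d)$ and $(I_C/I_C^2)\otimes\O(d)\cong N_C^*(d)$. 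Riemann-Roch on a smooth irreducible $C$ of degree $k$ and genus $g$ (with $d$ large enough to kill both $H^1$'s) then yields codimension $ndk-(n+1)(k-1)-(n+2)g+1$. In case (b), the codimension grows at least like $\binom{d+m}{m}$ where $m=\dim C\geq 2$, comfortably dominating the line case. Combined with classical upper bounds on $\dim\Hilb^P(\mb{P}^n)$ (bounded by $(n+1)k$ up to genus corrections in the curve case, and easily bounded in higher dimensions), comparison with $nd-2n+3$ gives a strict inequality for $d\geq 7$ and every $C$ which is not a line.

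The main obstacle is handling non-smooth, non-reduced, or reducible curves $C$, where the clean Riemann-Roch argument fails. I would address this either by degenerating $C$ within its Hilbert scheme component to a reduced curve and invoking semicontinuity of $\codim\{F\in I_C^2\}$, or by replacing $C$ with a sufficiently general subscheme and using monotonicity (the codimension only decreases under passing to subschemes, so it suffices to bound a reduced subcurve). The sharp constant $d\geq 7$ is forced by the tightest arithmetic comparison, which occurs for small-degree rational curves of maximal Hilbert-scheme dimension (plausibly plane cubics or rational normal curves); below this threshold the excess of the curve-case codimension over $nd-2n+3$ need not be positive.
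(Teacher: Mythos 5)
This theorem is not actually proved in the present paper; it is cited from \cite{T}, so there is no internal proof to compare yours against. I will therefore assess the proposal on its own.

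Your opening computation is correct: for $\ell\subset\mb{P}^n$ a line, $I_\ell^2$ has codimension $nd+1$ in $H^0(\O(d))$, and after accounting for $\dim G(1,n)=2n-2$ one gets $\codim W_\ell = nd-2n+3$, which matches the figure the paper quotes from \cite[Lemma 5.1]{K}. The idea of stratifying the other components of $Z$ by the geometry of a subvariety $C\subset X_{\mathrm{sing}}$ and comparing codimensions is also the right overall shape of the argument.

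However, the proposal has a genuine gap at the step ``bound $\dim\Hilb^P(\mb{P}^n)$.'' There is no clean general upper bound of the form ``$(n+1)k$ up to genus corrections'' for the dimension of an arbitrary irreducible component of the Hilbert scheme of curves in $\mb{P}^n$; Hilbert scheme components can badly overshoot the expected dimension, and this is precisely where all the work lies. Your two proposed fixes do not close the gap. Degenerating within a Hilbert scheme component to a smooth reduced curve $C_0$ and using upper semicontinuity of the fiber dimension of $\{(F,C):C\subset\mathrm{Sing}(F)\}\to\Hilb$ is a valid direction when available (it \emph{lower}-bounds the codimension at the generic member), but many components of the Hilbert scheme have no smooth, or even reduced, member, so the degeneration you need need not exist. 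The monotonicity route --- passing from $C$ to a reduced subcurve $C'\subset C$ --- does lower-bound $\codim\{F\in I_C^2\}$ by $\codim\{F\in I_{C'}^2\}$, but it also replaces $\Hilb^P$ by $\Hilb^{P'}$ and gives no control relating $\dim_{[C]}\Hilb^P$ to $\dim_{[C']}\Hilb^{P'}$, which is what the comparison actually needs. In short, the Riemann--Roch estimate on the fiber side is fine, but the base side of the incidence correspondence is not controlled by the argument as written, and this is the hard part that determines the threshold $d\geq 7$.
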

%Theorem \ref{KT} is expected to hold for all degrees $d$ \cite{K}, but as it stands we will need to consider the case $d\leq 6$ separately, as we cannot control the locus of hypersurfaces with positive dimensional singular loci. Since Theorem \ref{RC2} only applies when $e\leq 2$ in this case, this will not be a technical obstacle. Conjecture \ref{CHS} is proven for $n-1=d\leq 5$ and all $e$ \cite[Theorem 1.6]{CS}, with the small caveat that when $(d,n)=(5,6)$, the evaluation map is only flat away from finitely many points. Therefore, we will only mention the case $d\leq 6$ briefly. 

\begin{Cor}
\label{S1}
Suppose $7\leq d=n-1$. Let $Z\subset \mb{P}^N$ be the locus of hypersurfaces that are not 1-level. Then, the codimension of $Z$ is $\binom{n+1}{2}-3(n-2)$. 
\end{Cor}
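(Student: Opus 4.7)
The plan is to stratify $Z$ according to the three ways a hypersurface can fail to be $1$-level: (A) positive-dimensional singular locus, (B) some smooth point admits a positive-dimensional family of lines, or (C) some singular point admits a family of lines of dimension at least $n-d+1=2$. Writing $Z = Z_A \cup Z_B \cup Z_C$, I would exhibit a component of codimension exactly $\binom{n+1}{2}-3(n-2)$ and show no other component has smaller codimension.

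For the exhibited component, I would take $W \subset \mb{P}^N$, the locus of hypersurfaces containing a projective $2$-plane. Fixing $\Pi \cong \mb{P}^2$, the condition $\Pi \subset X$ is $\binom{d+2}{2} = \binom{n+1}{2}$ linear conditions on $F$, the Grassmannian $\on{Gr}(3,n+1)$ has dimension $3(n-2)$, and a general member of $W$ contains only finitely many $2$-planes, giving $\codim W = \binom{n+1}{2} - 3(n-2)$. For any $X \in W$ containing $\Pi$, each smooth point $p \in \Pi \cap X_{\mathrm{sm}}$ already carries the $1$-parameter family of lines in $\Pi$ through $p$, exceeding the expected dimension $e(n-d+1)-2=0$ for $e=1,\ d=n-1$, so $W \subset Z_B \subset Z$.

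For the lower bound, Theorem \ref{KT} (applicable since $d \geq 7$) gives that the dominant component of $Z_A$ consists of hypersurfaces singular along a line, of codimension $nd+1-2(n-1) = n^2-3n+3$ (obtained by requiring all partials of $F$ to vanish on a fixed line and subtracting $\dim\on{Gr}(2,n+1)$); the inequality $(n-3)(n+2) \geq 0$ shows this exceeds $\binom{n+1}{2}-3(n-2)$ for $n \geq 3$. Proposition \ref{SS} gives $\codim Z_C \geq \binom{n+1}{2}$. Theorem \ref{LT} yields $\codim(Z_B \cap U) \geq \binom{n+1}{2}-3(n-2)$ on the open dense locus $U$ of smooth hypersurfaces. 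The main technical obstacle is to handle components of $Z_B$ lying entirely in the codimension-$1$ discriminant $\mb{P}^N \setminus U$; these are controlled by essentially the same argument as Theorem \ref{LT}, since being singular is an additional codimension-$1$ condition, so such components have codimension strictly greater than $\binom{n+1}{2}-3(n-2)$. Assembling these bounds gives $\codim Z = \binom{n+1}{2}-3(n-2)$.
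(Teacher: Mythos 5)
Your decomposition $Z = Z_A \cup Z_B \cup Z_C$ into positive-dimensional singular locus, smooth points with excess lines, and singular points with excess lines is exactly the paper's case analysis, and your use of Theorem~\ref{KT} (with the $n^2-3n+3$ count for hypersurfaces singular along a line), Proposition~\ref{SS}, and Theorem~\ref{LT} matches the paper's citations; the direct computation that the $2$-plane locus $W$ has codimension $\binom{d+2}{2}-3(n-2)=\binom{n+1}{2}-3(n-2)$ simply unwinds what Theorem~\ref{LT} already asserts, so your exhibition of the extremal component is a slightly more self-contained version of the same step.

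The one place you go beyond the paper is in flagging that Theorem~\ref{LT} is stated only on the open set $U$ of smooth hypersurfaces, so it does not \emph{a priori} control components of $Z_B$ that lie entirely inside the discriminant $\mb{P}^N\setminus U$. The paper's proof does not address this point at all, so your attention to it is warranted. However, the resolution you offer --- that ``being singular is an additional codimension-$1$ condition,'' hence such components have codimension strictly greater than $\binom{n+1}{2}-3(n-2)$ --- is a heuristic, not an argument. A component $V$ of $Z_B$ inside the discriminant need not be cut out of a good component of $Z_B$ by one extra equation; intersecting $Z_B$ with $\Delta$ drops dimension by one only for components not already contained in $\Delta$, and nothing you have said rules out a low-codimension component supported entirely on singular hypersurfaces. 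A rigorous patch would either (i) observe that the incidence-variety argument underlying Theorem~\ref{LT} only uses smoothness of the point $p$, not global smoothness of $X$, so it bounds $\codim Z_B$ directly, or (ii) stratify by the (finite, controlled) singular locus and reduce to Theorem~\ref{LT} on each stratum. With either patch in place, your proof matches the paper's intended argument.
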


\begin{proof}
We need to compare the contributions of hypersurfaces that
\begin{enumerate}
\item
have a larger dimensional family of lines than expected through a smooth point,
\item
have a larger dimensional family of lines than expected through a singular point,
\item
or have a positive-dimensional singular locus.
\end{enumerate}
The first case happens in codimension $\binom{n+1}{2}-3(n-2)$ by Theorem \ref{LT}. The dimension of hypersurfaces singular along a line is $dn-2n+3=n^2-3n+3$ \cite[Lemma 5.1]{K} and bounds the dimension of hypersurfaces with positive-dimensional singular locus by Theorem \ref{KT}. 
The third case is covered by Proposition \ref{SS}. Comparing these bounds yields the result. 
\end{proof} 

\subsection{Bend and break in families}
Let $\mb{P}^N$ be the space of degree $d$ hypersurfaces in $\mb{P}^n$, and $S_e\subset \mb{P}^N$ denote the closure of the hypersurfaces that are not $e$-level. We have a chain of inclusions
\begin{align*}
S_1\subset S_2\subset S_3\subset \cdots\subset \mb{P}^N.
\end{align*}
We will use the argument in \cite[Theorem 6.2]{RY} to bound the codimension of each inclusion $S_e\subset S_{e+1}$. 
\begin{Lem}
\label{OL}
Suppose $A\subset B\times C$ with projections $\pi_1: A\to B$ and $\pi_2:A\to C$. If $C$ is a projective scheme and $H\subset C$ is a general linear section. Then, $\dim(\pi_1(\pi_2^{-1}(H)))=\dim(\pi_1(A))$ if $\pi_1: A\to B$ has positive-dimensional fibers, respectively $\dim(\pi_1(\pi_2^{-1}(H)))=\dim(\pi_1(A))-1$ if $\pi_1: A\to B$ is generically finite onto its image. 
\end{Lem}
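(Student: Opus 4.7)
The plan is to handle both cases by analyzing, for each $b\in \pi_1(A)$, the image $F_b:=\pi_2(\pi_1^{-1}(b))\subset C$. The key observation is that $b\in \pi_1(\pi_2^{-1}(H))$ if and only if $F_b\cap H\ne\emptyset$. Moreover, since $\pi_1^{-1}(b)\subset \{b\}\times C$ projects isomorphically onto $F_b$ under $\pi_2$, the dimension of $F_b$ equals the dimension of the fiber $\pi_1^{-1}(b)$. So the question reduces to deciding when $F_b$ meets a general $H$.

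For the first case, the generic fiber dimension $\dim A-\dim \pi_1(A)$ is positive, so $F_b$ is a positive-dimensional closed subset of $C$ for $b$ in a dense open $U\subset \pi_1(A)$. Embedding $C$ in a projective space, $H$ is a general hyperplane section, and any positive-dimensional closed subvariety of projective space meets every hyperplane. Hence $F_b\cap H\ne\emptyset$ for every $b\in U$, so $U\subset \pi_1(\pi_2^{-1}(H))$. Taking closures gives the claimed equality $\dim \pi_1(\pi_2^{-1}(H))=\dim \pi_1(A)$.

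For the second case, generic finiteness of $\pi_1$ forces $\dim A=\dim \pi_1(A)$, so I would instead count dimensions through $\pi_2$. Assuming $\pi_2(A)$ is positive-dimensional, a general hyperplane section cuts $\overline{\pi_2(A)}$ in codimension $1$, and the generic fiber of $\pi_2\colon A\to \pi_2(A)$ has the expected dimension over this slice, so $\dim \pi_2^{-1}(H)=\dim A-1$. Since $\pi_1$ is generically finite on $A$, its restriction to the closed subset $\pi_2^{-1}(H)$ is again generically finite onto its image, yielding $\dim \pi_1(\pi_2^{-1}(H))=\dim A-1=\dim \pi_1(A)-1$.

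I do not expect a substantive obstacle: the argument is essentially dimension bookkeeping plus the fact that positive-dimensional projective subvarieties meet every hyperplane. The one genuine subtlety is the degenerate configuration in the second case when $\pi_2(A)$ is zero-dimensional: a general $H$ then misses $\pi_2(A)$ entirely, $\pi_2^{-1}(H)$ is empty, and the formula fails as written. In the intended applications $C$ plays the role of a (positive-dimensional) Kontsevich space and $\pi_2(A)$ is positive-dimensional, so this case does not occur.
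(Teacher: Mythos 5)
Your argument is correct and matches the paper's one-sentence proof in spirit: the paper treats only the positive-fiber-dimension case by choosing $H$ to cut a generic fiber of $\pi_1$ down by one, which is exactly your observation that a positive-dimensional projective subvariety meets every hyperplane. For the generically finite case, the one assertion you leave unargued is that $\pi_1$ restricted to $\pi_2^{-1}(H)$ is still generically finite; this is true because the locus $E\subset A$ where $\pi_1$ has positive-dimensional fibers satisfies $\dim E<\dim A$, so for general $H$ the set $E\cap\pi_2^{-1}(H)$ has dimension strictly less than $\dim\pi_2^{-1}(H)=\dim A-1$ (either $H$ misses $\pi_2(E)$ or cuts $E$ in codimension one), hence contains no component of $\pi_2^{-1}(H)$. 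Your closing caveat — that the formula in the generically finite case requires $\pi_2(A)$ to be positive-dimensional — identifies a genuine gap in the lemma's statement that the paper's proof does not address, though as you note it is harmless in the intended application.
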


\begin{proof}
If $A\to B$ has positive dimensional fibers, choose $H$ so that it cuts down the dimension of a general fiber of $\pi_1$ by 1. 
\end{proof}

\begin{comment}
new plan:
we're going to apply this obvious lemma to $\M_{0,0}(\mc{X}/\mb{P}^N,e)\subset \mb{P}^N\times \M_{0,0}(\mb{P}^n,e)$
\end{comment}
\begin{comment}
For each $e$, let $T_{e}$ be the defined as the closure of the hypersurfaces that either
\begin{enumerate}
\item
have a point that is not $k$-level for $k\leq e$ or
\item
are singular along a rational curve of degree $k$ for $k\leq e-1$.
\end{enumerate}
By definition, we have the chain
\begin{align*}
T_1\subset S_1\subset  T_2\subset S_2\subset T_3\subset S_3\subset \cdots\subset \mb{P}^N.
\end{align*}
\end{comment}
\begin{Thm}
\label{BBF}
%(\cite[Theorem 6.2]{RY})
The codimension of $S_{e-1}\subset S_{e}$ is at most $2n-(n-d+1)e$. 
\end{Thm}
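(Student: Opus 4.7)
The plan is to follow the bend-and-break-in-families strategy of \cite[Theorem 6.2]{RY}, applying Lemma \ref{OL} to the incidence
\[
A := \M_{0,0}(\mc{X}/\mb{P}^N, e) \subset B \times C, \qquad B = \mb{P}^N,\ C = \M_{0,0}(\mb{P}^n, e),
\]
with projections $\pi_1, \pi_2$. First I would construct the bad total space $\mc{B} \subset A$ consisting of pairs $(X, [f])$ where $X \in S_e$ and the image of $f$ meets a point $p \in X$ witnessing the failure of $e$-levelness. By the definition of $e$-level, the fiber of $\ev\colon \M_{0,1}(X, e)\to X$ over such a $p$ has dimension at least $e(n-d+1)-1$, so $\dim \mc{B} \geq \dim S_e + e(n-d+1) - 1$.

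Next I would cut $C$ by general hyperplanes, invoking Lemma \ref{OL} at each step: as long as the restriction of $\pi_1$ to the cut bad locus continues to have positive-dimensional fibers, its image in $B$ remains equal to $S_e$. The number of admissible cuts is bounded by the fiber dimension $e(n-d+1)-1$, since each cut generically drops the $\pi_1$-fibers of $\mc{B}$ by $1$. Choosing the cuts so that the image of $\pi_2$ lies in a linear subspace of $C$ of dimension exactly $2n-1$ triggers Lemma \ref{BB}; the discrepancy between these two numerical thresholds is exactly what governs the codimension of $S_{e-1}$ inside $S_e$.

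Finally, applying Lemma \ref{BB} produces a reducible curve $f = f_1 \cup f_2$ with $\deg f_i = e_i < e$ lying in some hypersurface $X \in S_e\setminus S_{e-1}$ and passing through its bad point $p$. Because $X$ is $(e-1)$-level, the evaluation fibers over $p$ at the intermediate degrees $e_1, e_2$ have the expected dimensions $e_i(n-d+1)-2$, so the family of reducible curves $f_1 \cup f_2$ through $p$ on $X$ spans a locus of dimension at most $e(n-d+1)-3$, strictly less than the bad-fiber dimension $e(n-d+1)-1$. Pushing this dimension deficit back through the cuts in $C$ and the inclusion $S_{e-1}\subset S_e$ yields $\codim_{S_e}(S_{e-1}) \leq 2n - (n-d+1)e$.

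The main obstacle is the simultaneous calibration of the cuts: the linear sections must be general enough that fiber dimensions drop transversely; $\pi_1$'s surjection onto $S_e$ must be preserved via Lemma \ref{OL}; and the projected image dimension in $C$ must land precisely at $2n-1$. A subtle but crucial point is that reducible bad curves account for strictly less than the full bad-fiber dimension through $p$, so Lemma \ref{BB} genuinely forces an incompatibility that can only be resolved by the asserted codimension inequality.
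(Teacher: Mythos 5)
Your high-level plan is the right one -- bend and break in families, with a final contradiction obtained by comparing the reducible locus (which RY Proposition 5.5 bounds for $(e-1)$-level hypersurfaces) against Proposition \ref{CD1}. But one structural choice you made is backward, and it kills the endgame.

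You propose to cut $C = \M_{0,0}(\mb{P}^n,e)$ by hyperplanes and keep the $\pi_1$-image in $B=\mb{P}^N$ fixed. This inverts the way Lemma \ref{OL} is used in the paper: there one puts $B = \M_{0,0}(\mb{P}^n,e)$, $C = \mb{P}^N$, and cuts by hyperplanes in $\mb{P}^N$. The direction matters for two reasons. First, the whole point is to shrink the image in $\mb{P}^N$ to dimension at most $2n-1 - (a+2)$ (respectively $2n-1-(a+1)$ for a smooth bad point), so that under the contradiction hypothesis on $\codim(S_{e-1}\subset S_e)$ the cut family misses $S_{e-1}$ entirely; your version keeps the $\mb{P}^N$-image equal to all of $S_e$, which still contains $S_{e-1}$, so you cannot justify that the reducible curves you produce live over $(e-1)$-level hypersurfaces. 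Second, and more fatally, cutting the curve moduli $C$ shrinks the bad fibers of $\ev\colon \mc{A}'\to \mc{X}$ along with the total space -- you acknowledge this when you say "each cut drops the $\pi_1$-fibers by $1$." But the contradiction at the end requires those fibers to retain dimension $\geq a+1$ (smooth case) while RY Proposition 5.5 caps the reducibles in each fiber at $a-1$, so that reducibles have codimension $\geq 2$ in $\mc{A}'$, violating Proposition \ref{CD1}. Once you have cut the fibers down toward dimension zero, the codimension-$2$ conclusion evaporates and there is no contradiction. Cutting $\mb{P}^N$ restricts the family of hypersurfaces while leaving each bad fiber over $\mc{X}$ untouched, which is exactly what is needed.

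Two smaller omissions: you never handle the degenerate case where the bad component $\mc{A}$ consists entirely of covers of a line (then its image in $\M_{0,0}(\mb{P}^n,e)$ is too small for bend and break; the paper disposes of it by noting $\mc{A}$ maps into $S_1$), and you do not run the two cases of a smooth versus a singular bad point separately, even though the levelness thresholds $a+1$ versus $a+2$ differ by one and the paper does both. You also implicitly need the $PGL_{n+1}$-invariance observation that the image of $\mc{A}$ in $\M_{0,0}(\mb{P}^n,e)$ has dimension at least $3n-3$ to have any room to cut before reaching $2n-1$.
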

\begin{comment}
expected fibers: 2e-2
moduli needed: 2n-1
dimension needed to apply bend and break: (2n-1)-(2e-1)=2n-2e
\end{comment}
\begin{proof}
\begin{comment}
The argument is contained in \cite[Theorem 6.2]{RY}, but we will give the proof again for clarity\footnote{The bound has changed by 1 because \cite[Theorem 6.2]{RY} claims to prove $e$-layeredness of all the points in a general hypersurface. We have corresponded with the authors of \cite{RY}, and it is not clear how their argument implies $e$-layeredness of all points, but it shows Theorem \ref{BB}, which is enough for their main theorem.}. 
\end{comment}
Let $a=e(n-d+1)-2$ be the expected dimension of a fiber $\ev: \M_{0,1}(\mc{X}/\mb{P}^N,e)\to \mc{X}$. 
\begin{center}
\begin{tikzcd}
\M_{0,1}(\mc{X}/\mb{P}^N,e) \arrow[r] \arrow[bend left=20, rr] \arrow[d]  \arrow[bend left=20, dd] & \M_{0,1}(\mb{P}^n,e) \arrow[r] & \M_{0,0,}(\mb{P}^n,e)\\
\mc{X} \arrow[d] & &\\
\mb{P}^N & &
\end{tikzcd}
\end{center}
Suppose $\mc{A}\subset \M_{0,1}(\mc{X}/\mb{P}^N,e)$ is an irreducible component of 
\begin{align*}
\{[C] \in  \M_{0,1}(\mc{X}/\mb{P}^N,e)|  \ \ev([C])\text{ is a singular point}, \text{dimension of fiber of $\ev$ at }[C]\geq a+2\}. 
\end{align*}
If $\mc{A}$ consists of covers of lines, then $\mc{A}\to \mb{P}^N$ maps into $S_1$. If $\mc{A}$ contains a map that is not a cover of a line, then the image of $\mc{A}\to \M_{0,0}(\mb{P}^n,e)$ has dimension at least $3n-3$ from $PGL_{n+1}$-invariance, as we can interpolate a curve from $\mc{A}$ through $3$ general points by taking the $PGL$-translates of a single curve. (It is easy to do a bit better, for example by considering the dimension of the space of conics, but it is only important to us that the dimension of the image of $\mc{A}\to \M_{0,0}(\mb{P}^n,e)$ is at least $2n-1$.)

Now, we apply Lemma \ref{OL} to the image of $\mc{A}$ under the forgetful map 
\begin{align*}
\mc{A}\subset \M_{0,1}(\mc{X}/\mb{P}^N,e)\to \M_{0,0}(\mc{X}/\mb{P}^N,e)\subset \mb{P}^N\times \M_{0,0}(\mb{P}^n,e).
\end{align*}
to cut $\mc{A}$ by hyperplane sections in $\mb{P}^N$ to produce $\mc{A}'\subset \mc{A}$ such that $\mc{A}'\to \M_{0,0}(\mb{P}^n,e)$ has image dimension $2n-1$ and $\im(\mc{A}'\to  \M_{0,0}(\mc{X}/\mb{P}^N,e))\to \M_{0,0}(\mb{P}^n,e)$ is generically finite onto its image, so $\im(\mc{A}'\to  \M_{0,0}(\mc{X}/\mb{P}^N,e))$ is also dimension $2n-1$. By Lemma \ref{BB}, $\mc{A}'$ contains maps from reducible curves. The image of $\mc{A}'\to \M_{0,0}(\mc{X}/\mb{P}^N,e)\to \mb{P}^N$ has dimension at most $2n-1-(a+2)$. 

\begin{comment}
If $\im(\mc{A}'\to \mc{X})\to \mb{P}^N$ is generically finite onto its image, then $\mc{A}'\to \M_{0,0}(\mb{P}^n,e)$ is generically finite onto its image. Therefore, $\mc{A}'$ has dimension $2n-1$ and the image of $\mc{A}\to \mb{P}^N$ has dimension at most $2n-1-(a+2)$. If $\im(\mc{A}'\to \mc{X})\to \mb{P}^N$ has positive dimensional fibers, then $\mc{A}'$ has dimension at most $2n$, but now the fibers of $\mc{A}'\to \mb{P}^N$ have dimension at least $(a+2)+1$, so again the image of $\mc{A}'\to \mb{P}^N$ has dimension at most $2n-1-(a+2)$. 
\end{comment}

Finally, we assume, for the sake of contradiction, that the codimension of $S_{e-1}$ in the image of $\mc{A}\to \mb{P}^N$ is at least $2n-(n-d+1)e+1=2n-(a+2)+1$. Then, by construction, $\mc{A}'\to \mb{P}^N$ misses the locus $S_{e-1}$ completely. Applying \cite[Proposition 5.5]{RY} shows the locus of reducible curves in $\mc{A}'$ has dimension at most $e(n-d+1)-2=a$ in each fiber of the evaluation map $\mc{A}'\to \mc{X}$. This implies the locus of reducible curves in $\mc{A}'$ has codimension at least 2, contradicting Proposition \ref{CD1}. 

Similarly, to finish we need to consider the case where a smooth point is not level. We let $\mc{A}$ be an irreducible component of the closure of
\begin{align*}
\{[C] \in  \M_{0,1}(\mc{X}/\mb{P}^N,e)|  \ \ev([C])\text{ is a smooth point}, \text{dimension of fiber of $\ev$ at }[C]\geq a+1\}. 
\end{align*}
If $\mc{A}$ consists of covers of lines, then $\mc{A}\to \mb{P}^N$ maps into $S_1$. Otherwise, as before, $\mc{A}\to \M_{0,0}(\mb{P}^n,e)$ has dimension at least $3n-3$. Let $\mc{A}$ by hyperplane sections in $\mb{P}^N$ to produce $\mc{A}'\subset \mc{A}$ such that $\mc{A}'\to \M_{0,0}(\mb{P}^n,e)$ has image dimension $2n-1$ and the image of $\mc{A}'\to \mb{P}^N$ is at most $2n-1-(a+1)$. As before, Lemma \ref{BB} shows $\mc{A}'$ contains reducible curves. 

If we assume, for the sake of contradiction, that the codimension of $S_{e-1}$ in the image of $\mc{A}\to \mb{P}^N$ is at least $2n-(n-d+1)e+1=2n-(a+1)+1$, then $\mc{A}'\to \mb{P}^N$ misses the locus $S_{e-1}$ completely. Applying \cite[Proposition 5.5]{RY} shows the locus of reducible curves in $\mc{A}'$ has dimension at most $e(n-d+1)-2=a$ in each fiber of the evaluation map $\mc{A}'\to \mc{X}$ over a singular point and has dimension at most $a-1$ in each fiber over a smooth point. Since the general point in the image of $\mc{A}'\to \mc{X}$ is smooth, again we have the locus of reducible curves in $\mc{A}'$ has codimension at least 2, contradicting Proposition \ref{CD1}. 
\end{proof}

\begin{comment}
\begin{center}
\begin{tikzcd}
\M_{0,1}(\mc{X}/\mb{P}^N,e) \arrow[r] \arrow[bend left=20, rr, "\pi"] \arrow[d]  \arrow[bend left=20, dd, "\rho"] & \M_{0,1}(\mb{P}^n,e) \arrow[r] & \M_{0,0,}(\mb{P}^n,e)\\
\mc{X} \arrow[d] & &\\
\mb{P}^N & &
\end{tikzcd}
\end{center}
\end{comment}

\begin{comment}
(1) look at a component in \M_{0,1}(\mc{X}/\mb{P}^N,e) lying above $\mc{X}$ where the fibers exceed the expected dimension (let the dimension of the generic fiber in this component be a)
(2) let the image of that component in \mb{P}^N be B. There are two cases:
	(i) B consists of only singular points
	(ii) B consists generically of smooth points
(3) cut B by generic hyperplanes to get B' so that B' is contained in the (e-1) level hypersurfaces
(4) want to show that the stuff lying above B hit something of dimension at least 2n-1 in M(P^N,e)
(5) codimension of general fiber of \M_{0,1}(\mc{X}/\mb{P}^N,e)\to  \M_{0,0,}(\mb{P}^n,e) in \M_{0,1}(\mc{X}/\mb{P}^N,e) is at least
3n-3 (if a general element does not map to a line) (b)
(6) codimension of general fiber in P^N is at least 
3n-3-a-(n-1)=2n-1-a      (b-a-(n-1))
(7) dimension of B' should be at least 2n-1-a, so the two intersect in finitely points in general    (b-a-(n-1))

\end{comment}

\subsection{Conclusion of argument}
\label{conclusiondimension}

\begin{proof} (of Theorem \ref{FEV})
To prove Theorem \ref{FEV}, it suffices to show that the fibers of the evaluation map are of the expected dimension \cite[Lemma 4.5]{HRS}. 

First suppose $d\geq 7$. We apply Theorem \ref{BBF} to show $S_e$ is not all of $\mb{P}^N$. By Corollary \ref{S1}, we have $S_1$ has codimension at least $\binom{n+1}{2}-3(n-2)$ in $\mb{P}^N$. Theorem \ref{BBF} implies then that $S_e$ has codimension at least 
\begin{align*}
\binom{n+1}{2}-3(n-2)-\sum_{e'=2}^{e}{(2n-2e')}&=\binom{n+1}{2}-3(n-2)-2n(e-1)+e(e+1)-2\\
&=\frac{1}{2}(n^2-n-4ne+2e^2+2e+8).
\end{align*}
Solving for the values of $e$ for which $n^2-n-4ne+2e^2+2e+8>0$ yields Theorem \ref{FEV} in the case $d\geq 7$. 

If $d<7$, then $e\leq 2$. We can assume $d\in \{5,6\}$ \cite[Theorem 1.6]{CS}. As mentioned in Remark \ref{lremark}, we chose to restrict to hypersurfaces with finitely many singular points to simplify the argument for $d\geq 7$. When $e=2$, we can replace $S_1$ with the locus of hypersurfaces for which all the points are 1-level and the singular locus does not contain a line. Hypersurfaces singular along a line have codimension $n^2-3n+3$ \cite[Lemma 5.1]{K}, which is at least the codimension of hypersurfaces containing a $2$-plane for $n\geq 3$. Then, we can run the same argument in Proposition \ref{BBF} to see flatness in Theorem \ref{FEV} as \cite[Proposition 5.5]{RY} still applies. 
\end{proof}

\section{Irreducible components}
\label{Irreducible}

\subsection{Conics through a point}
We will work with the Hilbert scheme of conics instead of the Kontsevich space to focus on degree 2 maps that are not covers of a line. 
\begin{Def}
We let 
\begin{enumerate}
\item
$\Hilb_{2t+1}(\mb{P}^n)$ denote the Hilbert scheme of conics in $\mb{P}^n$
\item
$\Hilb_{2t+1}(X)$ denote the Hilbert scheme of conics in $X$
\item
$\Hilb_{2t+1}(\mc{X}/\mb{P}^N)$ denote the relative Hilbert scheme of conics in the fibers of $\mc{X}\to \mb{P}^N$
\item
$\mc{C}\to \Hilb_{2t+1}(\mc{X}/\mb{P}^N)$ be the universal curve. 
\end{enumerate}
\end{Def}
Note that $\Hilb_{2t+1}(\mb{P}^n)$ is smooth, as a $\mb{P}^5$ bundle over $\mb{G}(2,n)$, $\Hilb_{2t+1}(\mc{X}/\mb{P}^N)$ is smooth as $\Hilb_{2t+1}(\mc{X}/\mb{P}^N)\to \Hilb_{2t+1}(\mb{P}^n)$ is a $\mb{P}^{N-5}$-bundle, and $\mc{C}$ is smooth as $\mc{C}\to \mb{P}^n$ is a $\mb{G}(1,n-1)\times \mb{P}^4\times \mb{P}^{N-5}$ bundle. 

The goal of this section is to prove
\begin{Prop}
\label{conic}
The general fiber of $\mc{C}\to \mc{X}$ is smooth and connected. 
\end{Prop}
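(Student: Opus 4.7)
The plan is to exhibit each fiber of $\mathcal{C}\to\mathcal{X}$ as the zero locus of a section of a natural vector bundle on a smooth irreducible parameter space, and then extract smoothness and connectedness from standard Bertini-type and positivity theorems.

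By $PGL_{n+1}$-equivariance I would fix $p\in\mathbb{P}^n$ once and for all and let $Y_p\subset\Hilb_{2t+1}(\mathbb{P}^n)$ denote the parameter space of conics through $p$. Using the paper's description of $\Hilb_{2t+1}(\mathbb{P}^n)$ as a $\mathbb{P}^5$-bundle over $\mathbb{G}(2,n)$, the locus $Y_p$ is a $\mathbb{P}^4$-bundle over the sub-Grassmannian of planes through $p$, hence smooth and irreducible of dimension $2n$. Letting $\pi:\mathcal{U}\to Y_p$ be the universal conic, $\pi_*\mathcal{O}_{\mathcal{U}}(n-1)$ is locally free of rank $2n-1$ (since $h^0(C,\mathcal{O}_C(n-1))=2n-1$ for every arithmetic-genus-zero conic $C$), and evaluation at the distinguished section $\sigma_p$ surjects onto $\mathcal{O}_{Y_p}$, giving a short exact sequence
\[
0 \to \mathcal{E}_p \to \pi_*\mathcal{O}_{\mathcal{U}}(n-1) \to \mathcal{O}_{Y_p} \to 0
\]
with $\mathcal{E}_p$ locally free of rank $2n-2$. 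A hypersurface $X\ni p$ defined by $f=0$ determines a section $s_X\in H^0(Y_p,\mathcal{E}_p)$ by $C\mapsto f|_C$, and by construction the zero locus $Z(s_X)$ equals the fiber of $\mathcal{C}\to\mathcal{X}$ over $(p,X)$. Because the restriction $H^0(\mathbb{P}^n,\mathcal{O}(n-1))\twoheadrightarrow H^0(C,\mathcal{O}_C(n-1))$ is surjective for every conic $C$ (which follows from $H^1(\mathbb{P}^n,\mathcal{I}_C(n-1))=0$), the linear system $\{s_X\}_X$ globally generates $\mathcal{E}_p$.

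Smoothness of $Z(s_X)$ and the expected dimension $\dim Y_p-\mathrm{rk}\,\mathcal{E}_p=2$ then follow for general $X$ from Bertini for globally generated vector bundles. For the connectedness statement I would invoke a Fulton--Lazarsfeld--Sommese type connectivity theorem: since $\mathrm{rk}\,\mathcal{E}_p=2n-2<2n=\dim Y_p$, once $\mathcal{E}_p$ is known to be ample $Z(s_X)$ is automatically connected for a general section, and in particular for the section coming from a general $X$. Ampleness of $\mathcal{E}_p$ is plausible since it is constructed from the ample line bundle $\mathcal{O}_{\mathbb{P}^n}(1)$ pushed forward from the universal conic.

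The hard part will be a rigorous verification of ampleness of $\mathcal{E}_p$, since $Y_p$ is a nontrivial projective bundle over a Grassmannian and $\mathcal{E}_p$ does not split as a direct sum, so ampleness has to be established by estimating degrees on well-chosen test curves in $Y_p$ (for instance curves obtained by varying the conic inside a fixed plane, or by varying the plane while fixing a tangent direction at $p$). If a direct ampleness check proves too technical, my fallback is a specialization argument: exhibit one pair $(p_0,X_0)$ in the flat locus of $\mathcal{C}\to\mathcal{X}$ (nonempty by Theorem~\ref{FEV}) whose fiber is connected of the expected dimension by direct inspection, and then conclude by the constancy of $\dim_k H^0(\textrm{fiber},\mathcal{O})$ in a flat proper family, via Stein factorization.
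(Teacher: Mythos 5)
Your setup via the vector bundle $\mathcal{E}_p$ is correct, and the smoothness conclusion does go through: $\mathcal{E}_p$ is globally generated by the subspace $\{s_X : X\ni p\}$ (via the $2$-regularity of conics, which gives surjectivity of $H^0(\mathbb{P}^n,\mathcal{I}_p(n-1))\to H^0(\mathcal{O}_C(n-1)(-p))$), and Bertini for globally generated bundles then gives a smooth zero locus of expected dimension for general $X$. However, the connectedness half of your argument has a genuine gap: $\mathcal{E}_p$ is \emph{not} ample. Fix $q\neq p$ and let $B\subset Y_p$ be any curve of conics all passing through both $p$ and $q$ (such curves exist for $n\geq 4$, e.g.\ inside the $(n+1)$-dimensional family of conics through two fixed points). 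Evaluation at $q$ gives a surjection $\pi_*\mathcal{O}_{\mathcal{U}}(n-1)|_B\twoheadrightarrow\mathcal{O}_B$, and since $\deg\mathcal{O}_C(n-1)\geq 2$ this evaluation remains surjective on the subbundle $\mathcal{E}_p|_B$ of sections vanishing at $p$. Thus $\mathcal{E}_p|_B$ admits $\mathcal{O}_B$ as a quotient, so $\mathcal{E}_p|_B$ is not ample, and neither is $\mathcal{E}_p$. The Fulton--Lazarsfeld--Sommese connectedness theorem is therefore unavailable, and no amount of degree estimation on test curves will recover it.

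Your fallback is also flawed: $\dim_k H^0(\text{fiber},\mathcal{O})$ is not constant in a proper flat family, only upper semicontinuous, so exhibiting one connected special fiber tells you nothing about a general one. The paper's route is entirely different and sidesteps both issues. It invokes a criterion of Minoccheri: since the boundary components of $\Hilb_{2t+1}(\mathbb{P}^n)$ make $\mathcal{C}\to\Hilb_{2t+1}(\mathcal{X}/\mathbb{P}^N)$ a Zariski-local projective bundle, $\mathcal{C}$ and $\mathcal{X}$ are both smooth, $\mathcal{X}$ is rational (hence \'etale simply connected, because it is a $\mathbb{P}^{N-1}$-bundle over $\mathbb{P}^n$), and once one shows the singular locus of $\mathcal{C}\to\mathcal{X}$ has codimension $\geq 2$ in $\mathcal{C}$ (Proposition~\ref{codim2}), Zariski--Nagata purity forces the Stein factorization to be \'etale, hence an isomorphism. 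The real work is Proposition~\ref{codim2}, which is what your plan was trying to avoid: on the stratum of smooth conics it reduces via a normal bundle sequence to the surjectivity of a multiplication map $W_3\times W_1^{n-2}\to W_{2d-1}$ on binary forms (Lemma~\ref{multiplication}, ultimately a secant variety computation on a rational normal curve), and the stratum of line pairs is handled by a parameter count analogous to the normal bundle analysis for lines.
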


The proof of \cite[Theorem V.4.3]{KJ} regarding lines on a general hypersurface is very similar, and so is good preparation for the proof of Proposition \ref{conic}. The proof of Proposition \ref{conic} will reduce to Propositions \ref{secant} and \ref{codim2} below. 

\begin{comment}
\begin{proof}
Since $\mc{C}\to \mc{X}$ is a surjective map between smooth varieties, the general fiber is smooth by generic smoothness.  
%Maybe use lci + singular in codimension 2 to apply Serre's criterion in characteristic p
%LCI is from https://mathoverflow.net/questions/230937/generic-smoothness-type-of-results-in-positive-characteristic
To see connectedness, let $\mc{C}'\to\mc{X}$ be the Stein factorization of $\mc{C}\to \mc{X}$. Let $D\subset \mc{C}'$ be the singular locus of $\mc{C}'\to \mc{X}$. The inverse image of $D$ in $\mc{C}$ is contained in the singular locus of the map $\mc{C}\to \mc{X}$. By Proposition \ref{codim2}, $D$ has codimension at least 2 when pulled back to $\mc{C}$ hence codimension at least 2 in $\mc{C}'$. By the Purity theorem \cite[Tag 0BMB]{stacks-project}, $\mc{C}'\to \mc{X}$ is \'etale. 

To finish, it suffices to see that $\mc{X}$ is \'etale simply-connected. This follows from the fact that $\mc{X}\to \mb{P}^n$ is a $\mb{P}^{N-1}$ bundle and the homotopy exact sequence for \'etale fundamental groups \cite[Lemma 2]{etale}. 
%\cite[Tag 0BUM]{stacks-project}. 
\end{proof}
\end{comment}

\begin{Def}
For $a>0$, let $W_a=H^0(\mb{P}^1,\O_{\mb{P}^1}(a))$ be the degree $a$ forms in $2$ variables.
\end{Def}

The case $a=1$ of Lemma \ref{determinental} below is given in \cite[Lemma V.4.3.1.1]{KJ}. 
\begin{Lem}
\label{determinental}
Consider the multiplication map $m: W_a\times W_{b}\to W_{a+b}$. For a linear hyperplane $V\subset W_{a+b}$, let 
\begin{align*}
D(V):=\{f\in W_b\mid m(W_a\times \{f\})\subset V\}\subset W_b.
\end{align*}
Then, $\codim(D(V))=\min\{a,b,\ell\}$, where $\ell$ is the smallest positive integer such that $V\in \mb{P}(W_{a+b})^{*}$ lies on the $\ell$-secant variety $S_{\ell-1}(C)$,. Here $C\subset \mb{P}(W_{a+b})^{*}$ is the rational normal curve given as the image $\mb{P}^1\to \mb{P}(W_{a+b})^{*}$ where $p\in \mb{P}^1$ maps to $\{A\in W_{a+b}\mid A(p)=0\}$. 
\end{Lem}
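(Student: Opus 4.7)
The plan is to reinterpret $D(V)$ as the kernel of a linear map whose matrix is a catalecticant (Hankel) matrix, and then translate its rank into the secant-variety position of $V$ relative to $C$ via the apolarity correspondence for binary forms.

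Fix $\lambda \in W_{a+b}^{*}$ with $V = \ker \lambda$. Since $f \in D(V)$ encodes the condition $\lambda(gf) = 0$ for every $g \in W_a$, the subspace $D(V)$ is the right kernel of the bilinear pairing $B_\lambda \colon W_a \times W_b \to k$ defined by $(g,f) \mapsto \lambda(gf)$, so $\codim D(V) = \rank B_\lambda$. In the monomial bases $\{x^{a-i}y^{i}\}$ of $W_a$ and $\{x^{b-j}y^{j}\}$ of $W_b$, the $(i,j)$-entry of $B_\lambda$ is $\lambda_{i+j}$, where $\lambda_k := \lambda(x^{a+b-k}y^{k})$; in particular $B_\lambda$ is a Hankel matrix in the coordinates of $\lambda$.

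The next step identifies each point of $C$ with a scalar multiple of an evaluation functional. By the description in the lemma, the point of $C$ indexed by $p = (p_0 : p_1) \in \mb{P}^1$ is the hyperplane $\{A : A(p) = 0\}$, whose dual line in $W_{a+b}^{*}$ is spanned by $\ev_p \colon A \mapsto A(p_0,p_1)$. Consequently a functional lying on the $\ell$-secant variety $S_{\ell-1}(C)$ is a linear combination $\sum_{i=1}^{\ell} c_i \, \ev_{p_i}$, and its associated Hankel matrix is a sum of $\ell$ rank-one outer products, giving $\rank B_\lambda \leq \ell$. Combined with the trivial bound by the matrix dimensions, this yields one direction of the formula.

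The key point, and the main obstacle, is the converse: if the Hankel matrix $B_\lambda$ has rank $r$, then $\lambda$ already lies on $S_{r-1}(C)$. This is essentially Sylvester's theorem on Waring decompositions of binary forms. The argument runs as follows: any non-zero element of the right kernel of $B_\lambda$ yields a form $h \in W_b$ with $\lambda(h \cdot W_a) = 0$, and more generally any column relation produces an apolar form of some degree; taking $h$ of minimal such degree, its roots $p_1, \dots, p_r \in \mb{P}^1$ exhibit $\lambda$ as a combination of $\ev_{p_i}$ (the coefficients are recovered by a Vandermonde inversion when the roots are distinct, and the general case follows either by specialization from a nearby $\lambda$ or by invoking the classical Sylvester--Gundelfinger decomposition of binary forms). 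Combining both inequalities yields the claimed formula.
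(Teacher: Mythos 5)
Your proposal is correct and follows essentially the same route as the paper: both translate the condition defining $D(V)$ into a matrix equation whose coefficient matrix is the Hankel (catalecticant) matrix of the linear functional cutting out $V$, identify $\codim D(V)$ with the rank of that matrix, and then invoke the classical fact that the rank-$\ell$ determinantal loci of this matrix are the secant varieties $S_{\ell-1}(C)$ of the rational normal curve. The one difference is that where the paper simply cites this last fact (Harris, Proposition~9.7), you sketch a self-contained proof of it via apolarity and Sylvester's theorem, handling the easy containment by outer-product decomposition and gesturing at Sylvester--Gundelfinger plus a specialization argument for the converse; this is a reasonable fleshing-out of the citation rather than a genuinely new approach.
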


\begin{proof}
Let the elements of $W_b$, and $W_{a+b}$ be written as $b_0s^b+b_1s^{b-1}t+\cdots+b_b t^b$, and $c_0s^{a+b}+c_1s^{a+b-1}t+\cdots+c_{a+b}t^{a+b}$ respectively. So $f= b_0s^b+b_1s^{b-1}t+\cdots+b_b t^b$ is in $D(V)$ if and only if $ b_0s^{b+i}t^{j}+b_1s^{b+i-1}t^{j+1}+\cdots+b_b s^{i}t^{b+j}\in V$ for all $i,j\geq 0$ with $i+j=a$. If $V$ is written as defined by $d_0c_0+\cdots+d_{a+b}c_{a+b}=0$, then $f\in D(V)$ if and only if
\begin{align}
\label{matrixeq}
\begin{pmatrix}
c_0 & c_1 & \cdots & c_{b}\\
c_1 & c_2 & \cdots & c_{b+1}\\
\vdots & \vdots & \ddots & \vdots \\
c_a & c_{a+1} & \cdots& c_{a+b}
\end{pmatrix}
\begin{pmatrix}
b_0 \\ b_1\\ \vdots \\ b_b
\end{pmatrix}
&= 
\begin{pmatrix}
0 \\ 0 \\ \vdots \\ 0
\end{pmatrix}
\end{align}
Therefore, $\codim(D(V))$ is the rank of the matrix in \eqref{matrixeq} with $i,j$ entry $c_{i+j}$ for $0\leq i\leq a$, $0\leq j\leq b$. By \cite[Proposition 9.7]{H}, the locus in $\mb{P}(W_{a+b})^{*}$ where the matrix is rank $\ell$ is given by the $\ell$-secant variety $S_{\ell-1}(C)$ of the rational normal curve $C\subset \mb{P}(W_{a+b})^{*}$ given by the rank 1 matrices. Therefore, $C$ is given as the image of $\mb{P}^1\to \mb{P}(W_{a+b})^{*}$ mapping $[s:t]$ to $[s^{a+b}: s^{a+b-1}t: \cdots : t^{a+b}]$. So $[s:t]$ maps to the hyperplane in $\mb{P}(W_{a+b})$ defined by $c_0s^{a+b}+c_1s^{a+b-1}t+\cdots+c_{a+b}t^{a+b}$, which is precisely $\{A\in W_{a+b}\mid A(p)=0\}$ for $p=[s:t]$. 
\end{proof}

\begin{Lem}
\label{multiplication}
Suppose $n\geq 4$ and $d<n$. Then, the closed locus $Z$ of forms $(G,F_3,\ldots,F_n)$ for which the map $W_{3}\times W_{1}^{n-2}\to W_{2d-1}$ given by $(A,L_3,\ldots,L_n)\mapsto AG+L_3F_3+\cdots+L_nF_n$ is not surjective has codimension at least 2.
\end{Lem}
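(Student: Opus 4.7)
\textbf{Proof plan for Lemma \ref{multiplication}.}

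The plan is to encode non-surjectivity as the existence of a hyperplane $V\subset W_{2d-1}$ containing the image of the multiplication map, stratify the dual space $\mb{P}(W_{2d-1})^*$ by secant rank with respect to the rational normal curve $C$, and combine Lemma~\ref{determinental} with the dimensions of the strata.

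First, I would introduce the incidence variety
\[
I = \bigl\{(V, (G, F_3, \ldots, F_n)) : W_3\cdot G \subset V \text{ and } W_1\cdot F_i \subset V \text{ for each } i\bigr\}
\]
sitting inside $\mb{P}(W_{2d-1})^* \times W_{2d-4} \times W_{2d-2}^{n-2}$, so that $Z$ is the image of $I$ under projection to the second factor. For each $V$, the fiber $I_V$ is a linear subspace of the parameter space, and the conditions $W_3\cdot G\subset V$ and $W_1\cdot F_i\subset V$ decouple across the factors, so $\codim I_V$ is the sum of the codimensions computed by Lemma~\ref{determinental}, applied with $(a,b)=(3,2d-4)$ and with $(a,b)=(1,2d-2)$.

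Next, I would stratify $\mb{P}(W_{2d-1})^*$ by secant rank: let the $\ell$-th stratum be $S_{\ell-1}(C)\setminus S_{\ell-2}(C)$, irreducible of dimension $2\ell-1$, with $\ell$ running from $1$ to $d$. On the $\ell$-th stratum, Lemma~\ref{determinental} yields
\[
\codim I_V \;=\; \min(4, 2d-3, \ell) + (n-2)\min(2, 2d-1, \ell),
\]
and hence the image in the parameter space of the $\ell$-th piece of $I$ has codimension at least $\codim I_V - (2\ell-1)$.

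Finally, I would run a short case analysis in $\ell$ using the hypotheses $n\ge 4$ and $d\le n-1$ to check that each of these lower bounds is at least $2$. The decisive cases are $\ell=1$ (where the bound $n-2\ge 2$ uses exactly $n\ge 4$), small $\ell$ when $d=2$ (where $b=2d-4$ is small and one must track the minima carefully), and the top stratum $\ell=d$, where $V$ is a generic hyperplane, the fiber codimension saturates at $4+2(n-2)=2n$, and the slack over $\dim(\text{stratum})=2d-1$ comes out to $2n-2d+1$. I expect the top stratum $\ell=d$ to be the main obstacle: it is the place where all the minima are saturated, the stratum is largest, and the bound relies directly on $d\le n-1$. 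This explains structurally why the hypothesis $d<n$ is essential---the argument would fail at the top stratum precisely when $d=n$.
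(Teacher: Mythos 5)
Your proposal takes essentially the same route as the paper: the same incidence correspondence, the same stratification of $\mathbb{P}(W_{2d-1})^*$ by secant rank with respect to the rational normal curve, the same double application of Lemma~\ref{determinental} (once with $(a,b)=(3,2d-4)$ for $G$ and once with $(a,b)=(1,2d-2)$ for each $F_i$), and the same case analysis in $\ell$ with the same decisive bounds $n-2$ at $\ell=1$ and $2n-2d+1$ at the generic stratum. One small but worthwhile observation: the per-hyperplane fiber codimensions you write, $\min(4,2d-3,\ell)+(n-2)\min(2,2d-1,\ell)$, are exactly the numbers the paper's proof uses and are the correct ones, but they correspond to $\min\{a+1,b+1,\ell\}$ rather than the $\min\{a,b,\ell\}$ that Lemma~\ref{determinental} literally states---the lemma's conclusion has an off-by-one (the codimension of $D(V)$ is the rank of an $(a+1)\times(b+1)$ catalecticant matrix, hence bounded by $\min\{a+1,b+1\}$, not $\min\{a,b\}$), and both you and the paper silently apply the corrected form.
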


\begin{proof}
%We want to show the closed locus $Z$ of forms $(G,F_3,\ldots,F_n)$ for which the map $W_{3}\times W_{1}^{n-2}\to W_{2d-1}$ given by $(b,b_3,\ldots,b_n)\to bG+b_3F_3+\cdots+b_nF_n$ is not surjective has codimension at least 2. 
Consider the incidence correspondence 
$$\Phi=\{(V,G,F_3,\ldots,F_n)\mid GW_3+F_3W_1+\cdots+F_nW_1\subset V\}\subset \mb{P}(W_{2d-1})^{*}\times W_{3}\times W_{1}^{n-2}\to W_{2d-1}.$$ 

It suffices to show $\dim(\Phi)\leq \dim(W_3\times W_1^{n-2})$. To do so, we will consider the projection $\pi: \Phi\to \mb{P}(W_{2d-1})^{*}$, stratify $\mb{P}(W_{2d-1})^{*}$ and bound the loci in $\Phi$ lying over the strata separately using Lemma \ref{determinental}. As in Lemma \ref{determinental}, let $E\subset \mb{P}(W_{2d-1})^{*}$ be the rational curve parameterizing hyperplanes in $\mb{P}(W_{2d-1})$ of the form $\{A\in W_{a+b}\mid A(p)=0\}$ and let $E=S_1(E)\subset S_2(E)\subset S_3(E)\subset \mb{P}W_{2d-1}^{*}$ its first, second and third secant varieties. Now, we have four cases:
\begin{enumerate}
\item
The locus $S_1(E)$ is 1-dimensional. For $V\in S_1(E)$, $\pi^{-1}(V)$ is codimension $n-1$ in $W_{3}\times W_{1}^{n-2}$ by Lemma \ref{determinental}. Therefore, 
\begin{align*}
\dim(W_{3}\times W_{1}^{n-2})-\dim(\pi^{-1}(S_1(E)))=n-2. 
\end{align*}
\item
The locus $S_2(E)$ is 3-dimensional. For $V\in S_2(E)\backslash S_1(E)$, $\pi^{-1}(V)$ is codimension $2(n-1)$ in $W_{3}\times W_{1}^{n-2}$ by Lemma \ref{determinental}. Therefore, 
\begin{align*}
\dim(W_{3}\times W_{1}^{n-2})-\dim(\pi^{-1}(S_2(E)\backslash S_1(E))=2n-5. 
\end{align*}
\item
The locus $S_3(E)$ is at most 5-dimensional. For $V\in S_3(E)\backslash S_2(E)$, $\pi^{-1}(V)$ is codimension $2(n-1)+1$ in $W_{3}\times W_{1}^{n-2}$ by Lemma \ref{determinental}. Therefore, 
\begin{align*}
\dim(W_{3}\times W_{1}^{n-2})-\dim(\pi^{-1}(S_3(E)\backslash S_2(E))=2n-6.
\end{align*}
If $d=3$ and $n=4$, then 
\item
For $V\in \mb{P}(W_{2d-1})^{*}\backslash S_3(E)$,  $\pi^{-1}(V)$ is codimension $2(n-1)+2$ in $W_{3}\times W_{1}^{n-2}$ by Lemma \ref{determinental}. Therefore, 
\begin{align*}
\dim(W_{3}\times W_{1}^{n-2})-\dim(\pi^{-1}(\mb{P}(W_{2d-1})^{*}\backslash S_3(E))=2n-2d+1.
\end{align*}
\end{enumerate}
Since $n\geq 4$ and $d<n$, each case yields a bound that is at least $2$, which is what we wanted. 
\begin{comment}
By Lemma \ref{determinental} in the case $a=1$ and $b=2d-2$, the locus of $F_i'\in W_{2d-2}$ for which the map $W_1\to W_{2d-1}$ given by multiplication by $F_i'$ is contained in $V$ has codimension $2$. 

Let $S_1\subset S_2\subset S_3\subset \mb{P}W_{2d-1}^{*}$ be the first, second and third secant varieties to the rational normal curve described in Lemma \ref{determinental} in the case $a=3$ and $b=2d-4$. Since we have restricted ourselves to the open locus $U$ of forms with no common zero, we do not have to consider the case where $V$ is in $S_1$. We have to consider $S_2\backslash S_1$, $S_3\backslash S_2,S_3\backslash\mb{P}W_{2d-1}^{*}$. From Lemma \ref{determinental} in the case $a=3$ and $b=2d-4$, we see the hyperplanes $V$ in each of those sets impose $2+2(n-2),3+2(n-2),4+2(n-2)$ conditions on $(G',F_3',\ldots,F_n')$ respectively for the image of $W_3\times W_1^{n-2}\to W_{2d-1}$ to be contained in $V$. 

If $d\geq 4$, then $S_3\subset \mb{P}W_{2d-1}^{*}$ is a proper subvariety and combining these cases yields the locus $Z\subset U$ has codimension at least $4+2(n-2) -(2d-1)=2n-2d+1$. If $d=3$ and $n=4$, then $S_3=\mb{P}W_{2d-1}^{*}$, so the codimension is at least $3+2(n-2) -(2d-1)=2n-2d=2$.
\end{comment}

\end{proof}

\begin{Prop}
\label{secant}
Let $\RHilb_{2t+1}(\mc{X}/\mb{P}^N)\subset \Hilb_{2t+1}(\mc{X}/\mb{P}^N)$ denote the open locus of smooth conics. The singular locus of $\mc{C}\to \mc{X}$ has codimension at least 2 for $n\geq 4$. 
% $\min\{n-2,2n-2d+1\}$ in $\mc{C}|_{\RHilb_{2t+1}(\mc{X})}$ for $d\geq 4$.% If $d=3$, $n=4$ then the codimension is at least 2. 
\end{Prop}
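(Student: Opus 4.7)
The plan is to reduce the tangent-space computation to the multiplication problem in Lemma \ref{multiplication}. Since $\mc{X}$ and $\mc{C}$ are smooth, the singular locus of $\mc{C}\to\mc{X}$ is exactly the locus where the differential fails to be surjective. Because $\mc{C}\to\mb{P}^n$ is a smooth bundle, it suffices to bound the codimension of this locus in the fiber $\mc{C}_p$ over a fixed point $p=[1:0:\cdots:0]$, which I do by picking explicit coordinates via the $PGL_{n+1}$-action.

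Fix $\Lambda=V(z_3,\ldots,z_n)$, let $g=z_0z_2-z_1^2$ cut out a smooth conic $C\subset\Lambda$ through $p$, and parametrize $C\cong\mb{P}^1$ by $[s:t]\mapsto[s^2:st:t^2]$, so that $p$ corresponds to $t=0$. A hypersurface $X$ containing $C$ has defining equation $F=gq+\sum_{i=3}^n z_iF_i$ with $q\in\mb{C}[z_0,z_1,z_2]_{d-2}$ and $F_i\in\mb{C}[z_0,\ldots,z_n]_{d-1}$, and the Koszul relations among $(g,z_3,\ldots,z_n)$ only involve $g$ and the $z_i$, which vanish on $C$, so $q|_C\in W_{2d-4}$ and $F_i|_C\in W_{2d-2}$ are well defined independent of the chosen decomposition. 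A first-order deformation of $(C,X)\in\mc{C}_p$ is encoded by a tuple $(\dot g,\ell_3,\ldots,\ell_n,\dot q,\dot F_3,\ldots,\dot F_n)$, where $\dot g$ is a quadratic in $z_0,z_1,z_2$ vanishing at $p$ (modulo $\mb{C} g$), and $\ell_i\in\mathrm{Span}(z_1,z_2)$ deforms $\Lambda$; expanding $(g+\epsilon\dot g)(q+\epsilon\dot q)+\sum(z_i-\epsilon\ell_i)(F_i+\epsilon\dot F_i)$ gives
\[
\dot F=\dot g\cdot q+g\cdot\dot q+\sum z_i\,\dot F_i-\sum \ell_i F_i.
\]

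The crucial step is restricting this equation to $C$: the terms $g\dot q$ and $z_i\dot F_i$ vanish, leaving $\dot F|_C=\dot g|_C\cdot q|_C-\sum\ell_i|_C\cdot F_i|_C$, with both sides in $t\cdot W_{2d-1}$. A direct computation shows that $\dot g\mapsto \dot g|_C/t$ is an isomorphism onto $W_3$ and that each $\ell_i\mapsto\ell_i|_C/t$ is an isomorphism onto $W_1$. Since $I_C\cap\mb{C}[z]_d=(g,z_3,\ldots,z_n)_d$, once the restricted equation is solved for $(\dot g,\ell_i)$, corrections $\dot q,\dot F_i$ automatically exist, so the differential $T_{(C,X)}\mc{C}_p\to T_X\mc{X}_p$ is surjective if and only if the multiplication map
\[
W_3\oplus W_1^{n-2}\to W_{2d-1},\qquad (A,L_3,\ldots,L_n)\mapsto A\cdot q|_C-\sum L_i\cdot F_i|_C,
\]
is surjective, which is exactly the map of Lemma \ref{multiplication}.

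To conclude, observe that the restriction $F\mapsto(q|_C,F_3|_C,\ldots,F_n|_C)$ from $H^0(\mb{P}^n,I_C(d))$ to $W_{2d-4}\times W_{2d-2}^{n-2}$ is surjective by lifting: any $(G,F_3,\ldots,F_n)$ in the target comes from $F=g\tilde G+\sum z_i\tilde F_i$ for polynomial lifts $\tilde G$ and $\tilde F_i$. For $n\geq 4$ and $d=n-1<n$, Lemma \ref{multiplication} then bounds the codimension of the non-surjective locus in $\{X:C\subset X\}$ by at least $2$. Letting $C$ vary shows the singular locus in $\mc{C}_p$, and hence in $\mc{C}$, has codimension at least $2$. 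The main technical obstacle is the identification of tangent-space surjectivity with multiplication-map surjectivity, which requires careful bookkeeping of deformations of $\Lambda$, of the conic inside $\Lambda$, and of the decomposition of $F$ after restriction to $C\cong\mb{P}^1$.
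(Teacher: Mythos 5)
Your proof is correct and is essentially the paper's argument unwound in explicit coordinates. The paper obtains the same reduction by passing through the normal bundle exact sequence $0\to N_{C/X}\to N_{C/\mb{P}^n}\to N_{X/\mb{P}^n}|_C\to 0$, identifying the map $\O_C(2H)\oplus\O_C(H)^{\oplus(n-2)}\to\O_C(dH)$ as multiplication by $(G,F_3,\ldots,F_n)$ via the conormal dual, and using that smoothness of $\mc{C}\to\mc{X}$ at $(C,p,X)$ is controlled by the vanishing $H^1(N_{C/X}(-p))=0$; your direct parametrization of first-order deformations of $\Lambda$, of $g$, and of $F$, followed by restriction to $C\cong\mb{P}^1$, is that same deformation-theoretic computation done by hand. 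Both routes land on the multiplication map $W_3\oplus W_1^{n-2}\to W_{2d-1}$ and then conclude by Lemma \ref{multiplication}, noting (as you do explicitly and the paper does implicitly) that $F\mapsto(q|_C,F_3|_C,\ldots,F_n|_C)$ is a linear surjection onto $W_{2d-4}\times W_{2d-2}^{n-2}$, so the codimension bound of that lemma transfers.
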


\begin{proof}
Since all smooth conics are projectively equivalent, we can fix the smooth conic $C\subset \mb{P}^n$ with ideal $(Q(X_0,X_1,X_2),X_3,\cdots,X_n)$ and parameterization $\mb{P}^1\to \mb{P}^n$ given by $(s,t)\to (s^2,st,t^2,0,\ldots,0)$. It suffices to show that, in the $\mb{P}^{N-5}$ hypersurfaces $X$ that contain $C$, the locus of hypersurfaces $X$ with $h^1(N_{C/X}(-p))\neq 0$ has codimension at least $2$ for $p\in C$. 

Let $F(X_0,\ldots,X_n)$ cut out a hypersurface $X$ containing $C$. Then, $F$ can be written as
\begin{align*}
F &= Q(X_0,X_1,X_2)G(X_0,X_1,X_2)+X_3F_3(X_0,\ldots,X_n)+\cdots+X_nF_n(X_0,\ldots,X_n),
\end{align*}
where $G$ is degree $d-2$ and each $F_i$ is degree $d-1$. The polynomials $G$ and $F_i$ can be chosen independently. 
\begin{comment}For $p\in C$,
\begin{align*}
\partial_i F(p) &= 
\begin{cases}
G(p)\partial_iQ(p)&\qquad\text{ if }0\leq i\leq 2\\
F_i(p)&\qquad\text{ if } 3\leq i\leq n,
\end{cases}
\end{align*}
so $X$ is smooth along $C$ if and only if $G,F_3,\ldots,F_n$ do not have a common zero on $C$, in particular the locus of hypersurfaces singular at a point of $C$ has codimension at least $n-2$ in $\mb{P}^{N-5}$. Therefore, it suffices to restrict to the open locus in $\mb{P}^{N-5}$ of hypersurfaces smooth along $C$. 
\end{comment}
Consider the short exact sequence
\begin{center}
\begin{tikzcd}
0 \arrow[r] & N_{C/X}  \arrow[r] & N_{C/\mb{P}^n} \arrow[r] \arrow[equal,d]  &N_{X/\mb{P}^n}|_{C} \arrow[r] \arrow[equal,d]   &0\\
& & \O_C(2H)\oplus \O_C(H)^{\oplus (n-2)} \arrow[r] & \O_C(dH) &
\end{tikzcd}
\end{center}
To determine the map $\O_C(2H)\oplus \O_C(H)^{\oplus (n-2)} \to \O_C(dH)$, we consider the dual map. We have the conormal bundles $N_{C/\mb{P}^n}^\vee=\frac{(Q,X_3,\ldots,X_n)}{(Q,X_3,\ldots,X_n)^2}$ and $N_{X/\mb{P}^n}^{\vee}|_{C}=\frac{(F)}{(F)(Q,X_3,\ldots,X_n)}$ as quotients of ideal sheaves. The map $N_{X/\mb{P}^n}^\vee|_{C}\to N_{C/\mb{P}^n}^\vee$ is induced by inclusion $(F)\subset (Q,X_3,\ldots,X_n)$. Dualizing, the map $\O_C(2H)\oplus \O_C(H)^{\oplus (n-2)}\to \O_C(dH)$ is given by multiplication by the vector $(G,F_3,\ldots,F_n)$. The long exact sequence in cohomology implies $H^1(N_{C/X}(-p))=0$ if and only if $H^0(\O_C(2H-p)\oplus \O_C(H)^{\oplus (n-2)})\to H^0(\O_C(dH))$ is surjective. 

By pulling back via the parameterization $\mb{P}^1\to \mb{P}^n$, we are done by Lemma \ref{multiplication}. %Let $W_a=H^0(\mb{P}^1,\O_{\mb{P}^1}(a))$ be the degree $a$ forms in $2$ variables. %Consider the locus $U\subset W_{2d-4}\times W_{2d-2}^{n-2}$ of tuples $(G',F_3',\ldots,F_n')$ that have no common zero on $\mb{P}^1$. 
\end{proof}

\begin{Prop}
\label{codim2}
For $n\geq 4$, the singular locus of $\mc{C}\to \mc{X}$ has codimension at least 2 in $\mc{C}$. 
\end{Prop}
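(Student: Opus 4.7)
The plan is to leverage Proposition \ref{secant}, which already bounds the singular locus of $\mc{C}\to \mc{X}$ by codimension two over the open subset $\RHilb_{2t+1}(\mc{X}/\mb{P}^N)$ of smooth conics, and to handle only the remainder. The complement of $\RHilb$ is the pullback of the discriminant of singular plane conics, which is a hypersurface in $\Hilb_{2t+1}(\mb{P}^n)$ that is irreducible with generic point a pair of distinct lines (double lines form a proper closed subset, the Veronese in each $\mb{P}^5$ fiber); pulling back further to $\mc{C}$ yields an irreducible divisor $D$.

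To prove Proposition \ref{codim2} it therefore suffices to show that the singular locus of $\mc{C}\to\mc{X}$ is a proper closed subset of $D$, since Proposition \ref{secant} already handles the open complement of $D$ in $\mc{C}$. Because smoothness of the map is an open condition on $\mc{C}$, this reduces to exhibiting a single point $(X,C,p)\in D$ at which $\mc{C}\to\mc{X}$ is smooth; upper semicontinuity of $h^1$ then forces the smooth locus to be dense in the irreducible divisor $D$.

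To produce such a point, I would take $C=L_1\cup L_2$ a generic union of two distinct lines meeting at a node $q$, $X$ a generic degree-$(n-1)$ hypersurface containing $C$, and $p\in L_1\setminus\{q\}$ a generic smooth point of $C$. Smoothness of $\mc{C}\to\mc{X}$ at $(X,C,p)$ is equivalent to $H^1(C,N_{C/X}(-p))=0$. Using the normalization short exact sequence twisted by $\O_C(-p)$,
\begin{align*}
0\to N_{C/X}(-p)\to N_{C/X}|_{L_1}(-p)\oplus N_{C/X}|_{L_2}\to N_{C/X,q}\to 0,
\end{align*}
the vanishing reduces to $H^1$-vanishing on the two $\mb{P}^1$-components together with surjectivity of the induced map on global sections onto the $(n-2)$-dimensional fiber $N_{C/X,q}$. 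The restrictions $N_{C/X}|_{L_i}$ are locally free on $\mb{P}^1$ and can be computed locally at the node; for generic $L_i\subset X$ the underlying normal bundles $N_{L_i/X}$ are balanced, and a multiplication-map argument parallel to the proof of Lemma \ref{multiplication} (restricted to forms vanishing on $C$) yields both the needed cohomology vanishing and the surjection onto the fiber.

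The main obstacle is the local analysis at the node: the sheaf $N_{C/X}|_{L_i}$ differs from $N_{L_i/X}$ by a twist reflecting how $C$ fails to be a Cartier divisor at $q$ in codimension greater than one, and one must identify this twist correctly before running the cohomological computation. Once this is in place, as in the proof of Proposition \ref{secant}, the argument reduces to the surjectivity of a concrete polynomial-multiplication map on $\mb{P}^1$, which can be handled by the same parameter-count methodology.
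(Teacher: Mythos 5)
Your proposal is correct and runs parallel to the paper's at the level of strategy (both reduce to showing the singular locus of $\mc{C}\to\mc{X}$ is a proper closed subset of an irreducible codimension-one locus of reducible conics, after noting double lines only contribute in codimension two), but it diverges at the key step. The paper parametrizes the locus $\mc{Y}$ of triples $(L_1\cup L_2, p, X)$ with $p$ off the node by a smooth irreducible tower of projective bundles $\mc{Z}$ over $\mb{P}^n$, and deduces generic smoothness of $\mc{Z}\to\mc{X}$ from a parameter count for the \emph{universal line} $\M_{0,1}(\mc{X}/\mb{P}^N,1)\to\mc{X}$; this sidesteps entirely the normal sheaf of a nodal curve, which is precisely the ``main obstacle'' you flag. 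You instead attack $H^1(C,N_{C/X}(-p))=0$ directly via the normalization sequence $0\to N_{C/X}(-p)\to N_{C/X}|_{L_1}(-p)\oplus N_{C/X}|_{L_2}\to N_{C/X,q}\to 0$, which requires identifying the elementary modification $N_{C/X}|_{L_i}$ of $N_{L_i/X}$ at the node. This is doable, and in fact your plan can be streamlined more than you suggest: since the Fano scheme of lines on a general degree-$(n-1)$ hypersurface in $\mb{P}^n$ is smooth of dimension $n-2$, a general line $L$ (in particular $L_1$ through a general point, and $L_2$ general in $X$) has $N_{L/X}\cong\O_L^{\oplus(n-2)}$; the modification at $q$ then forces $N_{C/X}|_{L_i}\cong\O(1)\oplus\O^{\oplus(n-3)}$, which gives $H^1=0$ on each component and surjectivity onto the $(n-2)$-dimensional fiber $N_{C/X,q}$ for free, with no multiplication-map computation in the spirit of Lemma \ref{multiplication} needed. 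One small caveat: the irreducibility of your divisor $D$ is not quite immediate from the irreducibility of the discriminant in $\Hilb_{2t+1}(\mb{P}^n)$ since the fibers of $\mc{C}\to\Hilb_{2t+1}(\mb{P}^n)$ over reducible conics are themselves reducible; it follows either by a monodromy argument interchanging the two line components, or, as in the paper, from the fact that $D$ is the closure of the image of the irreducible $\mc{Z}$.
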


\begin{proof}
The space $\Hilb_{2t+1}(\mb{P}^n)$ can be stratified into three loci: smooth conics, unions of two distinct lines, and doubled lines. These strata are of codimensions $0$, $1$ and $2$, respectively. Since the Hilbert function is constant on $\Hilb_{2t+1}(\mb{P}^n)$, the map $\mc{C}\to \Hilb_{2t+1}(\mb{P}^n)$ is a Zariski-local projective bundle. This means the pullbacks of these strata in $\mc{C}$ are also of codimensions $0$, $1$, and $2$. Therefore, it suffices to show Proposition \ref{codim2} when restricted to the smooth conics and to the unions of two distinct lines. 

By Proposition \ref{secant}, we know Proposition \ref{codim2} is true when we restrict $\mc{C}$ to the locus where $\mc{C}\to \Hilb_{2t+1}(\mc{X}/\mb{P}^N)$ is smooth. Now, consider the locally closed subset $\mc{Y}\subset \mc{C}$ consisting of pointed curves $(C,p)$, where $C$ is a union of two distinct lines $L_1,L_2$, $p\in L_1\backslash L_2$. The locus $\mc{Y}$ is irreducible, as we can parameterize $\mc{Y}$ by a smooth, irreducible variety $\mc{Z}\to \mc{Y}$ by specifying $(C,p)$ by first choosing $p\in \mb{P}^n$, the two plane $P$ containing $C$, a line $L_1$ containing $p$ and contained in $P$, a second line $L_2$ contained in $P$, and finally a hypersurface $X$ containing $L_1\cup L_2$. Since the complement of the union of $\mc{Y}$ and the smooth locus of $\mc{C}\to \Hilb_{2t+1}(\mc{X}/\mb{P}^N)$ has codimension 2, it suffices to show the singular locus of $\mc{C}\to \mc{X}$ has codimension 1 in $\mc{Y}$. 

A parameter count involving the normal bundle of a line in a hypersurface similar to \cite[Proposition V.4.3.9]{KJ} shows the singular locus of the map from the universal line $\M_{0,1}(\mc{X}/\mb{P}^N,1)\to \mc{X}$ is singular in codimension at least $\min\{n-2,n-d\}=n-d$. A more complicated version of this parameter count was given in detail in the proof of Proposition \ref{secant} above. This means the map $\mc{Z}\to \mc{X}$ is smooth at a general point. Since the image $\mc{Y}$ of $\mc{Z}\to \mc{C}$ has codimension $1$ and $\mc{Z}\to \mc{Y}$ has finite reduced fibers, this means $\mc{C}\to \mc{X}$ is smooth at a general point of $\mc{Y}$. 
\end{proof}

\begin{proof}[Proof of Proposition \ref{conic}]
By \cite[Proposition 3.1]{Minoccheri}, it suffices to see $\mc{X}$ is \'etale simply-connected and the singular locus of $\mc{C}\to \mc{X}$ has codimension at least 2. The first condition follows from the fact that $\mc{X}\to \mb{P}^n$ is rational as a Zariski-locally trivial $\mb{P}^{N-1}$ bundle and the birational invariance of \'etale fundamental groups. 
% and the homotopy exact sequence for \'etale fundamental groups \cite[Lemma 2]{etale}. 
%\cite[Tag 0BUM]{stacks-project}. 
The second condition follows Proposition \ref{codim2} below. 
\end{proof}

\subsection{Layeredness}
Let $X\subset \mb{P}^n$ be a hypersurface of degree $d$, where $d\leq n-1$. 

\begin{Def}
A point $p\in X$ is called $e$-\emph{layered} if it is 1-level and for every $1<k\leq e$, every irreducible component parameterizing degree $k$ rational curves through $p$ contains reducibles. 
\end{Def}

\begin{Def}
A hypersurface $X$ is $e$-layered if it is $e$-level and a general point is $e$-layered.
\end{Def}

\begin{Rem}
\label{layeredremark}
We have corresponded with the authors of \cite{RY} and it is not clear how their argument as written shows $e$-layeredness at all points as claimed, but they brought us to the attention that $e$-levelness at all points and $e$-layeredness at a general point suffices to prove their main theorem. 
\end{Rem}

\subsection{Existence of $e$-layered hypersurfaces}
Let $\mb{P}^N$ be the space of degree $d$ hypersurfaces in $\mb{P}^n$, and $T_e\subset \mb{P}^N$ denote the closure of the hypersurfaces that are not $e$-layered. We have a chain of inclusions
\begin{align*}
T_1\subset T_2\subset T_3\subset \cdots\subset \mb{P}^N.
\end{align*}
\begin{Thm}
\label{BBF2}
The codimension of $T_{e-1}\subset T_{e}$ is at most $2n-(n-d+1)e$. 
\end{Thm}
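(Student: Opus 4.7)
The proof will adapt the bend-and-break argument of Theorem \ref{BBF}, trading the hypothesis ``fiber of $\ev$ larger than expected'' for ``some irreducible component of the fiber is disjoint from $\partial$''. Let $a = e(n-d+1)-2$ be the expected fiber dimension of $\ev: \M_{0,1}(\mc{X}/\mb{P}^N,e) \to \mc{X}$ over smooth points. A hypersurface $X \in T_e \setminus T_{e-1}$ is either (i) not $e$-level, in which case $X \in S_e \setminus S_{e-1}$ and the bound follows from Theorem \ref{BBF} together with the inclusion $S_{e-1} \subset T_{e-1}$, or (ii) $e$-level and admits, over a general point $p$, an irreducible component $V_p \subset \ev^{-1}(p) \subset \M_{0,1}(X,e)$ with $V_p \cap \partial = \emptyset$. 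Only case (ii) requires a new argument. Since $\M_{0,1}(X,e)$ is a local complete intersection of its expected dimension on each component, each such $V_p$ has dimension exactly $a$.

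For case (ii), let $\mc{A} \subset \M_{0,1}(\mc{X}/\mb{P}^N, e)$ be an irreducible component whose general fiber over $\mc{X}$ is such a $V_p$; in particular $\mc{A} \cap \partial$ does not dominate $\ev(\mc{A})$. First, $\mc{A}$ cannot generically consist of $e$-fold covers of lines, because for any line $L \ni p$ the closure of the $e$-fold covers of $L$ inside $\M_{0,1}(X,e)$ meets $\partial$: specializing the two degree $e$ binary forms defining the cover $\mb{P}^1 \to L$ to share a factor gives a stable map with reducible domain. By the $PGL_{n+1}$-interpolation argument used in Theorem \ref{BBF}, $\im(\mc{A} \to \M_{0,0}(\mb{P}^n, e))$ has dimension at least $3n-3$. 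Apply Lemma \ref{OL} to $\mc{A} \hookrightarrow \mb{P}^N \times \M_{0,0}(\mb{P}^n, e)$ by cutting with generic hyperplane sections in $\mb{P}^N$ until one obtains $\mc{A}' \subset \mc{A}$ of dimension $2n$ whose image in $\M_{0,0}(\mb{P}^n,e)$ has dimension exactly $2n-1$ and is generically finite onto its image. Assume for contradiction that the codimension of $T_{e-1}$ in $\im(\mc{A} \to \mb{P}^N)$ exceeds $2n-(n-d+1)e$. Then $\dim \im(\mc{A}' \to \mb{P}^N) = n+1-a$, and a computation analogous to Theorem \ref{BBF} (working when $n \geq 4$) shows $\im(\mc{A}' \to \mb{P}^N)$ misses $T_{e-1}$ entirely, so every hypersurface parameterized by $\mc{A}'$ is $(e-1)$-layered and in particular $(e-1)$-level. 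Lemma \ref{BB} then forces $\mc{A}' \cap \partial$ to be nonempty.

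To derive the contradiction, bound $\dim(\mc{A}' \cap \partial)$. Because $\mc{A}'$ inherits from $\mc{A}$ the property that the general fiber over $\mc{X}$ is the full $V_p$ with $V_p \cap \partial = \emptyset$, the image $D = \ev(\mc{A}' \cap \partial) \subset \ev(\mc{A}')$ is a proper closed subset, so $\dim D \leq \dim \ev(\mc{A}') - 1 = (2n-a)-1$. As $(e-1)$-levelness forces each $X \in \im(\mc{A}' \to \mb{P}^N)$ to have finite singular locus, the generic point of $D$ lies above a smooth point of its hypersurface, and \cite[Proposition 5.5]{RY} shows that the reducible locus in $\ev^{-1}(p) \subset \M_{0,1}(X,e)$ over such a smooth point has dimension at most $a-1$. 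Combining, $\dim(\mc{A}' \cap \partial) \leq (a-1)+(2n-a-1)=2n-2=\dim\mc{A}'-2$, so $\mc{A}' \cap \partial$ has codimension at least $2$ in $\mc{A}'$, contradicting Proposition \ref{CD1}. The case in which $D$ happens to be supported on singular points of hypersurfaces is analogous, using fiber dimension $a+1$ at singular points together with the bound $a$ on reducibles there. The main technical obstacle is the simultaneous bookkeeping needed to guarantee that the generic hyperplane cuts make $\im(\mc{A}' \to \mb{P}^N)$ miss $T_{e-1}$, keep $\im(\mc{A}' \to \M_{0,0}(\mb{P}^n,e))$ at dimension $2n-1$, and preserve the ``generic fiber equals $V_p$ and misses $\partial$'' property inherited from $\mc{A}$.
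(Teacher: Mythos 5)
Your proof follows essentially the same route as the paper's: peel off the $S_e$ contribution via Theorem~\ref{BBF}, then run a bend-and-break dimension count on a family $\mc{A}$ whose general fiber over $\mc{X}$ is the offending component $V_p\subset\ev^{-1}(p)$ disjoint from $\partial$, and contradict Proposition~\ref{CD1} using \cite[Proposition 5.5]{RY} and Lemma~\ref{OL}. One addition you make is worth noting: you explicitly rule out the case that $\mc{A}$ consists generically of covers of lines by observing that the boundary of $\M_{0,1}(\mb{P}^1,e)$ is a nonempty divisor, hence every $e$-fold cover of a line degenerates to a stable map with reducible domain and so can never lie in a component $V_p$ disjoint from $\partial$. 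The paper's proof of Theorem~\ref{BBF2} simply begins ``If $\mc{A}$ contains a map that is not a cover of a line\ldots'' without addressing the complementary case, so this is a genuine clarification.

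However, there is a real gap at the very first step of your case (ii). You write ``let $\mc{A}\subset\M_{0,1}(\mc{X}/\mb{P}^N,e)$ be an irreducible component whose general fiber over $\mc{X}$ is such a $V_p$,'' but you give no argument that a family with this property exists and dominates the relevant component of $T_e$. The set
\begin{align*}
\mc{B}:=\{[C]\in\M_{0,1}(\mc{X}/\mb{P}^N,e)\mid\text{a component of }\ev^{-1}(\ev([C]))\text{ containing }[C]\text{ is disjoint from }\partial\}
\end{align*}
is not obviously even constructible: ``a component of the fiber avoids a closed set'' is not a pointwise-readable condition, and without constructibility there is no guarantee that the closure $\ol{\mc{B}}$ contains a dense open subset lying in $\mc{B}$, which is precisely what is needed for ``general fiber over $\mc{X}$ is such a $V_p$'' to hold on an irreducible component. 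The paper devotes Proposition~\ref{FP} (a nontrivial argument using generic flatness, separable base change to split the generic fiber into geometrically irreducible pieces, and \'etale spreading out) to establishing exactly this constructibility, and only then defines $\mc{B}'$ and extracts $\mc{A}$. At the end you flag ``simultaneous bookkeeping'' of the hyperplane cuts as the main obstacle, but the bookkeeping is in fact manageable once $\mc{B}$ is known to be constructible; the missing ingredient is the constructibility statement itself, not the cuts.
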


\begin{Prop}
\label{FP}
If $f:X\to Y$ is a map between Noetherian schemes of finite presentation and $Z\subset X$ is a closed subset, then the set $A\subset X$ of all $x\in X$ such that a geometric component of the fiber $f^{-1}(f(x))$ containing $x$ is disjoint from $Z$ is constructible. 
\end{Prop}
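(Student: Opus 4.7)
My plan is to combine Noetherian induction on $Y$ with a spreading-out argument for the geometric irreducible components of the fibers of $f$, together with Chevalley's theorem on images of constructible sets.

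First I would reduce to producing a non-empty open $U\subset Y$ such that $A\cap f^{-1}(U)$ is constructible in $f^{-1}(U)$; iterating on the lower-dimensional closed complement $Y\setminus U$ then finishes by Noetherian induction. Passing to an irreducible component, assume $Y$ is integral with generic point $\eta$. By a standard spreading-out argument (EGA IV.9.7, on the constructibility of the number of geometric irreducible components of fibers and their fields of definition), after shrinking $U$ and passing to a finite \'etale cover $\pi\colon U'\to U$, the base change $X_{U'}:=f^{-1}(U)\times_U U'$ admits a decomposition $X_{U'}=X_1'\cup\cdots\cup X_r'$ into closed subschemes such that each $X_i'\to U'$ has geometrically irreducible fibers of constant dimension, and for every geometric point $\bar u'$ of $U'$ with image $\bar y$ in $Y$, the collection $\{X'_{i,\bar u'}\}_i$ matches the set of geometric irreducible components of $X_{\bar y}$ under the canonical identification $(X_{U'})_{\bar u'}\cong X_{\bar y}$.

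Next, let $Z':=Z\times_Y U'\subset X_{U'}$ and set $V_i:=f'(X_i'\cap Z')\subset U'$, where $f'\colon X_{U'}\to U'$. Each $V_i$ is constructible by Chevalley, and by construction $u'\in V_i$ if and only if $X'_{i,u'}$ meets $Z'_{u'}$. Form the constructible subset
\[\tilde W:=\bigcup_i\bigl(X_i'\setminus (f')^{-1}(V_i)\bigr)\subset X_{U'}.\]
I claim that under the finite map $\pi_X\colon X_{U'}\to f^{-1}(U)$, the image $\pi_X(\tilde W)$ is exactly $A\cap f^{-1}(U)$. Given $x\in A\cap f^{-1}(U)$ with a geometric preimage $\bar x\in X_{\bar y}$ lying on a component $V$ disjoint from $Z_{\bar y}$, lift $\bar y$ to any geometric point $\bar u'$ of $U'$, identify $V$ with $X'_{i,\bar u'}$ for the unique $i$, and let $x'\in X_i'$ be the image of $\bar x$; the hypothesis $V\cap Z_{\bar y}=\emptyset$ forces $X'_{i,u'}\cap Z'_{u'}=\emptyset$, so $u':=f'(x')\notin V_i$ and $x'\in\tilde W$ with $\pi_X(x')=x$. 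The converse is analogous: from $x'\in X_i'\setminus(f')^{-1}(V_i)$ one reads off a geometric component $X'_{i,\bar u'}$ of $(X_{U'})_{\bar u'}\cong X_{\bar y}$ through a geometric lift of $x$ that is disjoint from $Z_{\bar y}$. Since $\pi_X$ is finite, $\pi_X(\tilde W)$ is constructible in $f^{-1}(U)$ by Chevalley, completing the inductive step.

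The main obstacle is the spreading-out step: one needs a sufficiently strong version of the EGA IV.9.7 results to guarantee not only that the number of geometric irreducible components of the fibers is constant over a suitable open $U$, but that these components are cut out globally as closed subschemes of $X_{U'}$ over a finite \'etale cover $U'\to U$. Once that structural input is in hand, the rest of the argument is a formal manipulation with Chevalley's theorem and Noetherian induction.
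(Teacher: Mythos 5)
Your proposal follows essentially the same route as the paper: Noetherian induction on the base, passage to a finite cover over which the geometric irreducible components of the generic fiber spread out to closed subschemes with geometrically irreducible fibers, and Chevalley's theorem to conclude constructibility. The ``spreading-out'' step you flag as the main obstacle is exactly the content the paper supplies, following Stacks Project Tag 0551 together with generic flatness and EGA IV, Theorem 9.7.7(i), so the two arguments differ only in how explicitly that step is carried out.
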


\begin{proof}
Since this is a statement about the underlying topological spaces, we can assume $X$ and $Y$ are reduced. By restricting to each component of $X$, we can assume $X$ is integral. By Noetherian induction on $X$, it suffices to prove Proposition \ref{FP} after restriction to some open subset of $Y$, so we can assume $Y={\rm Spec}(A)$ is affine and integral. Let $\eta\in Y$ be the generic point and $X_{\eta}$ be the fiber over the generic point. Following \cite[Tag 0551]{stacks-project}, we will find a finitely presented surjection $Y'\to Y$ such that each component of the generic fiber of $X\times_{Y}Y'$ is irreducible as follows. Since $X_{\eta}$ has finitely many components, there is a finite separable extension $L$ of $K(A)$ such that each component of $X_{\eta}\times _{{\rm Spec}(K(A))} {\rm Spec}(L)$ is geometrically irreducible \cite[EGA IV, Theorem 4.4.4, Proposition 4.5.9]{EGAIV2}. 

Since $L$ is separable over $K(A)$, it is generated by some element $\alpha$, which we can assume to be in $A$ after multiplying by an element of $A$. Then, if the field extension $L$ over $K(A)$ is defined by the monic polynomial $p$, then let $A'=A[T]/p(T)$ and $Y'={\rm Spec}(A')$. %By generic flatness \cite[EGA IV, Theorem 6.9]{EGAIV2}, we can restrict to a standard open of $Y$ so that $Y'\to Y$ is flat. 
If we prove Proposition \ref{FP} for the morphism $X\times_{Y}Y'\to Y'$, then we prove Proposition \ref{FP} via Chevellay's Theorem \cite[EGA IV, Theorem 1.8.4]{EGAIV1} applied to $X\times_{Y}Y'\to X$. Therefore, we can assume in addition that $X_{\eta}$ has geometrically irreducible components.  

%Since $X$ is finitely presented over $Y$, $\Omega_{X/Y}$ locally finitely generated \cite[Chapter 6, Corollary 1.9]{Liu} and hence zero on an open dense subset of $Y$. By generic flatness \cite[EGA IV, Theorem 6.9]{EGAIV2}, we can restrict $Y$ to a standard affine, so that 

%After an \'etale base change of $Y$, we can assume all the irreducible components of $X_{\eta}$ are geometrically irreducible \cite[Tag 0551]{stacks-project}.

Let $X_{1,\eta},\ldots,X_{r,\eta}$ denote the irreducible components of $X_{\eta}$ and $X_1,\ldots,X_r$ the closures of $X_{1,\eta},\ldots,X_{r,\eta}$ in $X$ respectively. Since $X\backslash (X_{1,\eta}\cup \cdots \cup X_{r,\eta})\to Y$ misses $\eta\in Y$, Chevellay's Theorem again implies we can restrict $Y$ to a standard affine that avoids the image. Therefore, we can assume  $X$ is the set-theoretically the union of $X_1,\ldots,X_r$. 
%We can replace $Y$ by an open subset so that $X$ is the set-theoretically the union of $X_1,\ldots,X_r$ \cite[Tag 054Y]{stacks-project}. 

By generic flatness \cite[EGA IV, Theorem 6.9]{EGAIV2} and the fact that flat morphisms are open \cite[EGA IV, Theorem 2.4.6]{EGAIV2}, we can replace $Y$ by an open subset so that each $X_i\to Y$ is flat and surjective. %If $Z$ does not intersect $X_{i,\eta}$, then $Z\cap X_{i}$ maps to a constructible subset of $Y$ not containing $\eta$, so we can replace $Y$ by an open subset so that $Z\cap X_i$ is empty. Similarly, if $Z$ does intersect $X_{i,\eta}$, then $Z\cap X_i$ maps to a constructible subset of $Y$ that does contain $\eta$, so we can replace $Y$ by an open subset so that $Z\cap X_i$ surjects onto $Y$. Therefore, we can assume for each $i$ that $Z\cap X_i$ is empty or $Z\cap X_i$ surjects onto $Y$. 

By generic flatness again, we can replace $Y$ with a standard open to assume $X_i\cap X_j\to Y$ is flat for each pair $1\leq i,j\leq r$. By equidimensional of the fibers of a flat morphism \cite[EGA IV, Corollary 6.1.4]{EGAIV2}, we know $f^{-1}(p)\cap (X_i\cap X_j)\subset f^{-1}(p)\cap X_i$ is nowhere dense for $i\neq j$.  

%By replacing $Y$ with an open subset, we can assume that each geometric fiber of $X_i\to Y$ is irreducible \cite[Tag 0559]{stacks-project}.
By replacing $Y$ with an open subset, we can assume that each geometric fiber of $X_i\to Y$ is irreducible \cite[EGA IV, Theorem 9.7.7(i)]{EGAIV3}. Finally, if we let $S\subset \{1,\ldots,r\}$ be the subset of indices $i$ such that $X_i$ does not intersect $Z$, then $A$ is the union $\bigcup_{i\in S}{X_i}$. 
%Stacks tags originally used: 0551, 054Y, 054X, 0559
\end{proof}

\begin{proof}[Proof of Theorem \ref{BBF2}]
From Theorem \ref{BBF}, we know the codimension of $T_{e-1}\subset T_{e-1}\cup S_e$ is at most $2n-(n-d+1)e$. Using the same setup as the proof of Theorem \ref{BBF}, let $\mc{B}$ be \begin{align*}
%\ol{\{[C]\in \M_{0,1}(\mc{X}/\mb{P}^N,e)|\text{ an irreducible component of }\ev^{-1}(\ev([C]))\text{ does not contain reducibles}\}}.
\{[C]\in \M_{0,1}(\mc{X}/\mb{P}^N,e)|\text{ a component of }\ev^{-1}(\ev([C]))\text{ containing }[C]\text{ has no reducibles}\}.
\end{align*}
Note that $\mc{B}$ is constructible by Proposition \ref{FP}, so in particular every component of $\ol{\mc{B}}$ contains an open dense subset contained in $\mc{B}$. 

Consider the map $\ol{\mc{B}}\to \mc{X}\to  \mb{P}^N$, and consider the closed locus in $\im(\ol{\mc{B}}\to \mc{X})$ where the fiber dimension is $n-1$. Let  $\mc{B}'\subset \ol{\mc{B}}$ be the inverse image of that closed locus in $\ol{\mc{B}}$. Hence, $\mc{B}'$ are the stable maps in $\ol{\mc{B}}$ lying above the hypersurfaces $[X]\in \mb{P}^N$ that have a component covered by $\ol{\mc{B}}\to \mc{X}$ under the evaluation map. 

We want to control the image $\mc{B}'\to \mb{P}^N$. Let $\mc{A}\subset \mc{B}'$ be an irreducible component. If $\mc{A}$ contains a map that is not a cover of a line, then the image of $\mc{A}\to \M_{0,0}(\mb{P}^n,e)$ has dimension at least $3n-3$. Applying Lemma \ref{OL} as before allows us to cut $\mc{A}$ by hyperplane sections in $\mb{P}^N$ to obtain $\mc{A}'$ such that the image of $\mc{A}'\to \M_{0,0}(\mb{P}^n,e)$ has dimension $2n-1$. The image of the generic fiber of $\mc{A}'\to \M_{0,0}(\mb{P}^n,e)$ in $\mb{P}^N$ is finite. Lemma \ref{BB} shows $\mc{A}'$ contains stable maps from reducible curves. 

This means $\im(\mc{A}'\to \M_{0,0}(\mc{X}/\mb{P}^N,e))$ is also dimension $2n-1$. Since the fiber dimension of $\im(\mc{A}'\to \mc{X})\to \mb{P}^N$ is $n-1$, the fiber dimension of $\im(\mc{A}'\to \M_{0,0}(\mc{X}/\mb{P}^N,e))\to \mb{P}^N$ is at least $(n-1)+(a-1)$, where $a=(n-d+1)e-2$ as in the proof of Theorem \ref{BBF}. The image of $\mc{A}'\to \mb{P}^N$ has dimension at most $2n-1-(n+a-2)$. 

Assume, for the sake of contradiction, that the codimension of $T_{e-1}\subset T_e$ is at least $2n-(n-d+1)e+1$. Then, since $n\geq 3$ the image $\mc{A}'\to \mb{P}^N$ misses the locus $S_{e-1}$, which contradicts \cite[Proposition 5.5]{RY} and Proposition \ref{CD1}. Here, the point is \cite[Proposition 5.5]{RY} shows the locus parameterizing reducible curves intersects every fiber of $\mc{A}'\to \mc{X}$ in codimension at least 1, the general fiber of $\mc{A}'\to \mc{X}$ parameterizes no reducible curves, and singular points occur in the image of $\mc{A}'\to \mc{X}$ in codimension at least $n-1$ by the definition of $(e-1)$-levelness. This means the locus in $\mc{A}'$ parameterizing reducible curves is codimension at least $2$, which contradicts Proposition \ref{CD1}. 
\end{proof}

\begin{Cor}
\label{elayerexist}
If $e<n-\frac{1+\sqrt{n^2-n-15}}{2}$, then a general hypersurface $X$ of degree $d=n-1$ in $\mb{P}^n$ is $e$-layered. 
\end{Cor}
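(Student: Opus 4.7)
The plan is to run the same inductive codimension bookkeeping that appeared at the end of the proof of Theorem \ref{FEV} in Section \ref{conclusiondimension}, but with $S_\bullet$ replaced by $T_\bullet$ and Theorem \ref{BBF} replaced by Theorem \ref{BBF2}. The base case is essentially tautological: since there is no integer $k$ with $1<k\leq 1$, being $1$-layered coincides with being $1$-level, so $T_1=S_1$. Hence for $d=n-1\geq 7$, Corollary \ref{S1} gives $\codim_{\mb{P}^N}(T_1)\geq \binom{n+1}{2}-3(n-2)$.

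For the inductive step, $d=n-1$ forces $n-d+1=2$, so Theorem \ref{BBF2} reads $\codim(T_{e-1}\subset T_e)\leq 2n-2e$. Chaining these bounds from $T_1$ up to $T_e$ reproduces the exact computation carried out at the end of Section \ref{conclusiondimension}:
\begin{align*}
\codim_{\mb{P}^N}(T_e)\geq \binom{n+1}{2}-3(n-2)-\sum_{e'=2}^{e}(2n-2e')=\tfrac{1}{2}(n^2-n-4ne+2e^2+2e+8).
\end{align*}
The stated inequality on $e$ is precisely the condition that the right-hand side be positive, forcing $T_e\subsetneq \mb{P}^N$ and proving the corollary when $d\geq 7$.

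For the small-degree cases $d\in\{5,6\}$ (where the given bound forces $e\leq 2$), I would proceed as in the parallel small-degree portion of the proof of Theorem \ref{FEV}: replace the finite-singular-locus clause in the definition of $1$-levelness by the weaker condition from Remark \ref{lremark} that no line lies in the singular locus, and note that hypersurfaces singular along a line still have codimension $n^2-3n+3$, which is large enough for the previous codimension estimates to survive. All the substantive technical work has been absorbed into Theorem \ref{BBF2}, so there is no genuine obstacle remaining here; the only subtlety to verify is that the identification $T_1=S_1$ persists after this weakening, which it does by inspection of the definitions.
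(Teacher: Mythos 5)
Your treatment of the case $d\ge 7$ is exactly the paper's: you correctly note that $T_1=S_1$ (the layered condition is vacuous at level one), pull the codimension of $T_1$ from Corollary~\ref{S1}, and chain the $\codim(T_{e-1}\subset T_e)$ bound from Theorem~\ref{BBF2}, reproducing the arithmetic of Section~\ref{conclusiondimension}. The divergence is in the small-degree case. The paper handles $d\le 6$ (so $e\le 2$) by invoking Proposition~\ref{conic} directly: irreducibility of the space of conics through a general point, together with the observation that both this component and the double-cover-of-a-line component visibly contain reducible curves, gives $2$-layeredness at a general point without any further bend-and-break bookkeeping. You instead propose to imitate the $d\in\{5,6\}$ workaround from the proof of Theorem~\ref{FEV}, weakening the finite-singular-locus clause via Remark~\ref{lremark} and re-running the machinery.

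That is not automatic, and the phrase ``there is no genuine obstacle remaining here'' papers over the actual issue. The proof of Theorem~\ref{BBF} only needs \cite[Proposition 5.5]{RY}, which is why the paper's weakening works there. But the proof of Theorem~\ref{BBF2} additionally uses, in its final paragraph, that ``singular points occur in the image of $\mc{A}'\to\mc{X}$ in codimension at least $n-1$ by the definition of $(e-1)$-levelness,'' i.e.\ it uses the finite-singular-locus clause explicitly to force the reducible locus into codimension $\ge 2$. Once you relax that clause to ``no line in the singular locus,'' the singular locus may be positive-dimensional (a conic, say), so this codimension drops and the arithmetic must be re-examined (one would need to argue that $\codim(\on{Sing})+1\ge 2$ still holds, and separately that such hypersurfaces already lie in a locus of sufficiently high codimension in $\mb{P}^N$). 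None of this is checked in your sketch, and the identity $T_1=S_1$, while true, is not the subtlety in play. The paper's appeal to Proposition~\ref{conic} for $d\le 6$ is how it avoids precisely this re-verification; if you wish to keep your alternate route you should carry out the modified dimension count in Theorem~\ref{BBF2}'s proof rather than assert it.
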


\begin{proof}
If $d\geq 7$, then this follows from the same dimension computation in Section \ref{conclusiondimension} applied in the proof of Theorem \ref{FEV} using Theorems \ref{BBF} and \ref{BBF2}. When $d\leq 6$, it suffices to consider the case $e\leq 2$, in which case Proposition \ref{conic} suffices.  
\end{proof}

%\subsubsection{Case of degree $d\leq 6$}
%When $d\leq 6$, it suffices to consider the case $e\leq 2$, in which case Proposition \ref{conic} suffices.  

\subsection{Behrend-Manin stacks}
Let $X\subset \mb{P}^n$ be a smooth hypersurface of degree $d$. In order to keep track of the combinatorics of the components of reducible rational curves, we will use Behrend-Manin stacks. We refer the reader to \cite[Definitions 1.6, 3.13]{BM} for the precise definitions of a stable $A$-graph $\tau$ and the associated Behrend-Manin stack $\M(X,\tau)$. Also see \cite{HRS} for a shorter account that suffices for our purposes. 

Roughly, a stable $A$-graph keeps track of the combinatorics of the irreducible components of a stable map $C\to X$, including the dual graph of how they intersect, the marked points on each component, the degree of the map restricted to each component, and the genus of each component. Since we are dealing with rational curves, all the stable $A$-graphs we consider will have genus zero, meaning the genus of each vertex is zero and the underlying graph is a tree. 

Associated to a stable $A$-graph $\tau$, there is a set of vertices $\Ve(\tau)$, a set of edges $\Ed(\tau)$ connecting them, and a set of tails $\Ta(\tau)$, which can be thought of half edges attached to vertices. There is also a map $\beta: \Ve(\tau)\to \mb{Z}_{\geq 0}$, assigning a degree to each vertex. We also let $\beta(\tau)=\sum_{v\in \Ve(\tau)}\beta(v)$ and the expected dimension
\begin{align*}
\dim(X,\tau):=(n+1-d)\beta(\tau)+\#\Ta(\tau)-\#\Ed(\tau)+\dim(X)-3 
\end{align*}
\cite[Definition 3.4]{HRS}. Finally, there is a set of flags $\Fl(\tau)$, where we have two flags corresponding to each edge in $\Ed(\tau)$, corresponding to the two endpoints, and one flag for each tail. In particular, $\#\Fl(\tau)=2\#\Ed(\tau)+\#\Ta(\tau)$. 

The Behrend-Manin stack $\M(X,\tau)$ parameterizes stable maps $C\to X$, where the curve $C$ consists of \emph{prestable curves} $C_v$ \cite[Definition 2.1]{BM}, one for each vertex of $\tau$, that glue together and map to $X$ according to the data in $\tau$. The open locus of $\M(X,\tau)$ of strict maps is quicker to define. See \cite[Definition 3.7]{HRS} for more details. 

\begin{Def}
A stable map $C\to X$ in $\M(X,\tau)$ is a \emph{strict map} if $C_v\cong \mb{P}^1$ for each $v\in \Ve(\tau)$. The locus of strict maps is an open substack $\mc{M}(X,\tau)\subset \M(X,\tau)$. 
\end{Def}

A point in $\M(X,\tau)\subset \M(X,\tau)$ can be specified by the data $((C_v)_{v\in \Ve(\tau)},(h_v:C_v\to X)_{v\in \Ve(\tau)},(q_f)_{f\in \Fl(\tau)})$ such that $q_f\in C_v$, where $v$ is the vertex to which $q_f$ is attached. Each map $h_v: C_v\to X$ is specified to have degree $\beta(v)$. 

Since we want to think of $\Ta(\tau)$ as parameterizing marked points, for each $f\in \Ta(\tau)$, we have an evaluation map 
\begin{align*}
\ev_f: \M(X,\tau)\to X
\end{align*}
\cite[Definition 3.11]{HRS}. Similarly, we have $\ev_f$ for all $f\in \Fl(\tau)$, as the remaining flags correspond to the points of intersection between different prestable curves $C_v$ that piece together to give the domain of a stable map $C\to X$, and we can ask for the image of such an intersection point. 

\subsection{A criterion for smoothness}
\begin{Def}
Let $\tau_r(e)$ be the stable $A$-graph that has one vertex $v$, no edges, $r$ tails such that $\beta(\tau)=\beta(v)=e$. By definition, $\M(X,\tau_r(e))$ is the Kontsevich space $\M_{0,r}(X,e)$. Given any $A$-stable graph with $\beta(\tau)=e$ and $\#\Ta(\tau)=r$, there is a morphism $\M(X,\tau)\to \M_{0,r}(X,e)$ canonical up to relabeling the tails. 
\end{Def}

If we repeatedly specialize a rational curve $C\to X$ so that it breaks up into more and more components, then we eventually end up with a tree of lines. Since we will care about rational curves through a general point $p\in X$ given by a tree of lines, we make the following definition. By abuse of notation, it is different than the one given in \cite[Definition 5.8]{HRS}. 

\begin{Def}
Let a stable $A$-graph $\tau$ be called a \emph{basic} $A$-graph if $\beta(v)\in \{0,1\}$ for all $v\in \Ve(\tau)$ and $\# \Ta(\tau)=1$.
\end{Def}

\begin{Def}
Let a basic $A$-graph be called \emph{nondegenerate} if $\beta(v)=1$ for all $v\in \Ve(\tau)$. 
\end{Def}

The argument in \cite[Proposition 6.6]{HRS} applied in our case gives 
\begin{Prop}
\label{smooth}
Let $\tau$ be a basic $A$-graph and $X\subset \mb{P}^n$ be a smooth $e$-level hypersurface of degree $d=n-1$ with an irreducible Fano scheme of lines. Then, the morphism $\M(X,\tau)\to \M_{0,1}(X,e)$ maps a general point of $\M(X,\tau)$ to a point in the smooth locus of $\ev: \M_{0,1}(X,e)\to X$. 
\end{Prop}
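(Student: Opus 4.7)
The plan is to follow the inductive argument of \cite[Proposition 6.6]{HRS}, strengthened slightly so that the induction closes. I will prove by induction on $\#\Ve(\tau)$ that for any stable $A$-graph with $\beta(v) \in \{0,1\}$ for all $v \in \Ve(\tau)$ (but arbitrary $\#\Ta(\tau)$), a general point of $\M(X,\tau)$ is a smooth point of this Behrend--Manin stack at which every evaluation $\M(X,\tau) \to X$ attached to a tail is smooth. The proposition is the case $\#\Ta(\tau) = 1$.

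First I would handle the base case $\#\Ve(\tau) = 1$. If $\beta(v) = 0$, stability forces $\#\Ta(\tau) \geq 3$ and $\M(X,\tau) \cong \M_{0, \#\Ta(\tau)} \times X$ is smooth with every tail-evaluation equal to the second projection. If $\beta(v) = 1$, so $\M(X,\tau) = \M_{0,k}(X,1)$, then at a general point $(L, p_1, \ldots, p_k)$ the image of $p_1$ is general on $X$, and by $1$-levelness the tangent space $H^0(L, N_{L/X}(-p_1))$ to the fiber of $\ev$ at $p_1$ vanishes. Combined with $N_{L/X} \hookrightarrow \O_L(1)^{n-1}$ and $\deg N_{L/X} = (n-1) - d = 0$ (using $d = n-1$), writing $N_{L/X} = \bigoplus_{j=1}^{n-2} \O_L(a_j)$ with $a_j \leq 1$ and $\sum a_j = 0$, the vanishing forces $a_j = 0$ for all $j$. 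Hence $N_{L/X} \cong \O_L^{n-2}$, so $H^1(N_{L/X}) = 0$ gives smoothness of the moduli and global generation gives smoothness of each tail-evaluation. Irreducibility of $\F(X)$ propagates this to a dense open locus.

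For the inductive step I would pick any leaf $v_0 \in \Ve(\tau)$ attached to its neighbor $v_0'$ via the edge $e_0$, and let $\tau'$ be obtained from $\tau$ by deleting $v_0, e_0$ and promoting the flag at $v_0'$ formerly in $e_0$ to a new tail. The Behrend--Manin gluing morphism
\[
\M(X,\tau') \times_X \M(X,\tau_0) \to \M(X,\tau),
\]
where $\tau_0$ is the single-vertex graph at $v_0$ with its tails (including the flag from $e_0$ promoted to a tail), is \'etale-locally surjective onto a dense open. The inductive hypothesis applies to $\tau'$ (strictly fewer vertices, perhaps two tails---hence the strengthening), and the base case applied at the attaching image $q \in X$ handles $\tau_0$; in particular each side has smooth tail-evaluation at the new tail coming from the gluing flag.

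The hard part will be the cohomological verification that gluing preserves both smoothness and tail-evaluation smoothness. For the glued domain $C = C' \cup_q L_{v_0}$ and stable map $f : C \to X$, I would use the short exact sequence
\[
0 \to N_f \to N_{f|_{C'}} \oplus N_{L_{v_0}/X} \to T_{X,q} \to 0
\]
together with its twist by $\O_C(-p)$ for any tail $p$ of $\tau$. The long exact sequence reduces $H^1(C, N_f(-p)) = 0$ to $H^1$-vanishing on each factor---supplied by the inductive hypothesis and the base case---plus surjectivity of $N_{f|_{C'}}|_q \oplus N_{L_{v_0}/X}|_q \twoheadrightarrow T_{X,q}$, which is exactly the global generation at $q$ coming from smoothness of the new tail-evaluations on each side. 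Taking $p$ to be the marked tail of $\tau$ gives smoothness of $\ev$ on $\M_{0,1}(X,e)$ at the image; the untwisted version gives smoothness of the moduli itself.
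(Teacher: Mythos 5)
Your plan follows the same skeleton as the paper's proof: reduce to a cohomology vanishing, verify it for a single line via the normal bundle of a line in a degree $n-1$ hypersurface, and then propagate the vanishing up a tree of lines by induction and a Mayer--Vietoris argument. Where the paper cites \cite[Proposition 6.6]{HRS} for the reduction to a cohomology vanishing and \cite[Lemma 6.2]{HRS} for the tree induction, you re-derive both yourself, with the base case worked out in careful detail (including the $\beta(v)=0$ case, which is needed since basic $A$-graphs allow contracted vertices). The strengthened inductive hypothesis allowing several tails is the right move; it is exactly what makes the recursion self-propagating.

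The one place the plan as written would fail is the inductive step: the sequence
\[
0 \to N_f \to N_{f|_{C'}} \oplus N_{L_{v_0}/X} \to T_{X,q} \to 0
\]
is not correct. The Mayer--Vietoris sequence for a sheaf pulled back from $X$ along $f$ does have $T_{X,q}$ as its cokernel, but then the middle term must be $f^*T_X|_{C'} \oplus f^*T_X|_{L_{v_0}}$, not the two normal sheaves. With normal sheaves the ranks do not balance ($2(n-2)$ in the middle versus $n-1$ on the right forces rank $n-3$ on the left, not $n-2$), and $N_f$ has node torsion that has no simple Mayer--Vietoris decomposition. The cleaner route---and the one \cite{HRS} actually take, and which the paper is implicitly invoking via \cite[Proposition 6.6]{HRS}---is to work throughout with $f^*T_X(-p)$ rather than $N_f(-p)$: the smoothness of the Artin stack of prestable pointed curves (equivalently, adding auxiliary marked points to rigidify) is what identifies $H^1(C,f^*T_X(-p))=0$ with smoothness of $\ev$ at $(C,p,f)$, and $f^*T_X$ does have the Mayer--Vietoris sequence you want since it is the pullback of an honest bundle on $X$. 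Your base-case analysis already shows $N_{L/X}\cong \O_L^{n-2}$ and hence $T_X|_L$ is globally generated with $H^1(T_X|_L(-p))=0$ for general $(\ell,p)\in\M_{0,1}(X,1)$, which is exactly the input the $f^*T_X$ version of your induction needs.
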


\begin{proof}
Applying the argument in \cite[Proposition 6.6]{HRS}, reduces the question of checking whether a point $(h: C\to X)\in \M(X,\tau)$ is a smooth point of $\M_{0,1}(X,e)\to X$ to checking whether $H^1(C,h^{*}T_X(-p))=0$, where $p\in C$ corresponds to the unique tail in $\Ta(\tau)$. 

The tangent space to a fiber of $\M_{0,1}(X,1)\to X$ at a pair $(\ell,p)$, where $p\in \ell\subset X$ and $\ell$ is a line is $H^0(N_{\ell/X}(-p))$, and this is of the expected dimension if and only if $H^1(N_{\ell/X}(-p))=0$. By generic smoothness, this holds for a general pair $(\ell,p)$. From the short exact sequence, $0\to T\ell\to TX|_{\ell}\to N_{\ell/X}\to 0$, $H^1(TX|_{\ell}(-p))=0$ for a general point $(\ell,p)\in \M_{0,1}(X,1)$. Applying \cite[Lemma 6.2]{HRS} allows us to conclude. 
\end{proof}

\begin{Rem}
Instead of arguing via the smoothness of the nonseparated Artin stack of prestable curves as in \cite[Proposition 6.6]{HRS} in the beginning of the proof of Proposition \ref{smooth}, an equivalent way is to add $c$ marked points to $C\to X$ so the prestable curve $C$ is actually stable. Then, it suffices to show smoothness of $\M_{0,1+c}(X,e)\to \ol{M}_{0,1+c}\times X$ at $C\to X$ as this implies the map $\M_{0,1+c}(X,e)\to X$ is smooth at $C\to X$. %Since $\M_{0,1+c}(X,e)\to \M_{0,1}(X,e)$ is smooth at $C\to X$, it suffices 

Note that $e$-levelness guarantees flatness of the evaluation map \cite[Lemma 4.5]{HRS}, so smoothness at a point is equivalent to being smooth in its fiber. 
\end{Rem}
The condition on the Fano scheme is automatically satisfied in our case since the Fano scheme of lines is smooth and connected for a general hypersurface \cite[Theorem 4.3]{KJ} if the degree $d$ of $X\subset \mb{P}^n$ is at most $2n-4$ and $X$ is not a quadric surface.  

\subsection{Rational curves through a point}
Theorem \ref{RC2} will follow from Theorem \ref{FEV} for the statement on dimension, and Corollary \ref{elayerexist} and Theorem \ref{fiber} for the statement on irreducible components. 
\begin{Thm}
\label{fiber}
Let $e\geq 2$ and $d=n-1$. If there exist $e$-layered hypersurfaces, then for a general hypersurface $X$, the fiber $F_p$ of the evaluation map
\begin{align*}
\M_{0,1}(X,e)\to X 
\end{align*}
over a general point $p\in X$ has only one component that contains curves $C\to X$ that are not multiple covers of a line. 
\end{Thm}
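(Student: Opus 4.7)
The plan is to induct on $e$ using the Behrend-Manin machinery, with Proposition \ref{conic} as the base case. For $e=2$, the fiber $F_p$ consists of smooth conics through $p$ (an irreducible family by Proposition \ref{conic}) together with their reducible specializations $L_0\cup L_1$ and the double covers of the finitely many lines through $p$. The smooth conics and their reducible specializations lie in a single component, and the remaining curves are all multiple covers of a line, so there is exactly one non-line-cover component.

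For $e\geq 3$, we first reduce to a comparison of basic $A$-graphs. By $e$-layeredness applied iteratively to every $k\leq e$, any component $\mc{F}$ of $F_p$ not consisting of line-covers must contain a point of $\mc{M}(X,\tau)$ for some nondegenerate basic $A$-graph $\tau$, i.e., a tree of $e$ distinct lines with the marked tail on a vertex $v_0$ corresponding to a line $L_0$ through $p$. By Proposition \ref{smooth}, each such point is a smooth point of $\ev:\M_{0,1}(X,e)\to X$, so $\mc{F}$ is the closure of the image of $\mc{M}(X,\tau)\to F_p$ and is uniquely determined by $\tau$ up to relabeling. Hence it suffices to show that any two nondegenerate basic $A$-graphs $\tau_1,\tau_2$ give the same component of $F_p$.

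To connect two such basic $A$-graphs we specialize one step further via a conic contraction. Given $\tau_i$, let $v_1$ be a vertex adjacent to $v_0$ in the tree, and let $\sigma_i$ be the $A$-graph obtained by contracting the edge $(v_0,v_1)$; then $\sigma_i$ has one degree-$2$ vertex, parameterizing a smooth conic through $p$, together with $e-2$ remaining degree-$1$ vertices. The reducible conic $L_0\cup L_1$ sits in the boundary of the irreducible family of smooth conics through $p$ supplied by Proposition \ref{conic}, which realizes $\tau_i$ as a specialization of type-$\sigma_i$ stable maps and shows that the image of $\mc{M}(X,\sigma_i)\to F_p$ connects basic $A$-graphs of the form $\tau_i$ to all other basic $A$-graphs produced by alternative two-line degenerations of the varying conic. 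Combining this with the inductive hypothesis applied to the $(e-2)$-line subtree attached to the conic yields the desired identification $\tau_1\sim\tau_2$, and hence the uniqueness of the non-line-cover component of $F_p$.

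The main obstacle is the discreteness of the set of lines through a general point of $X$ when $d=n-1$: the straight HRS-style argument fails because the ``space of lines through $p$'' has no natural irreducible structure to exploit. The remedy, as advertised in the introduction, is to substitute conics through $p$ (Proposition \ref{conic}) for lines through $p$ in the Behrend-Manin combinatorics and to perform the extra specialization step $\tau\to\sigma$ described above in order to leverage this conic-irreducibility together with the smoothness criterion of Proposition \ref{smooth}.
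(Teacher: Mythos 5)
Your reduction to comparing nondegenerate basic $A$-graphs and the base case $e=2$ via Proposition~\ref{conic} match the paper's opening moves, but your inductive step follows a genuinely different route than the paper's, and it has real gaps. The paper never performs a conic contraction in the inductive step: instead it specializes the tree of lines to a chain (by colliding the attachment points on $C_{v_0}$ to a single point $q_a$ and then invoking the inductive hypothesis for the degree-$(e-1)$ piece through $q_a$), then specializes the chain to a comb whose collapsed backbone maps to the general point $p$, and finally exchanges the teeth two subsets at a time by degenerating the backbone and applying the inductive hypothesis at $p$ itself. Proposition~\ref{conic} enters only through the base case.

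The principal gap in your version is precisely where you wave at ``the image of $\mc{M}(X,\sigma_i)\to F_p$ connects $\tau_i$ to all alternative two-line degenerations of the varying conic.'' Proposition~\ref{conic} makes the family of conics through $p$ irreducible, but $\mc{M}(X,\sigma_i)$ is only a finite cover of (conic, attachment point) data, since for $d=n-1$ there are only finitely many lines through a point of the conic; that cover need not be connected. So the claim that a chain degeneration and a star degeneration of the conic lie in the closure of the \emph{same} component of $\mc{M}(X,\sigma_i)$ is exactly what must be proved, and your argument does not supply it. This is the reason the paper reroutes through the comb, where every attachment point coincides with the general marked point $p$, so the inductive hypothesis can be applied cleanly to the sub-combs indexed by a subset $S\subset\{v_0,\dots,v_{e-1}\}$. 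A secondary gap: after contracting one edge, the remaining degree-$(e-2)$ part is in general a \emph{forest} hanging off the conic at several moving attachment points, not a single subtree with a distinguished marked point, so ``the inductive hypothesis applied to the $(e-2)$-line subtree'' is not yet a well-posed statement; you would need to justify both that each attachment point lands in the open locus where the inductive hypothesis holds (this is fixable by openness and the fact that the conic passes through $p$) and that the several pieces can be deformed compatibly as the conic moves. Your diagnosis of the obstacle (discreteness of lines through $p$) and the instinct to exploit conic irreducibility are both right; the missing work is precisely the careful specialization choreography that the paper carries out.
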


A diagram of the proof of Theorem \ref{fiber} in the case $e=3$ is depicted in Figure \ref{pagefigure}. 

\begin{proof}
The case $e=2$ is Proposition \ref{conic}, so suppose $e>2$. We will use strong induction on $e$. By $e$-levelness, each component of $F_p$ has the same dimension. If $C\to X$ is a rational curve in $X$ through $p$, we can use $e$-layeredness to specialize $C\to X$ to $C_0\to X$, so that $C_0\to X$ lies in $\mc{M}(X,\tau)$, where $\tau$ is a nondegenerate basic $A$-graph. By Proposition \ref{smooth}, we can assume $C_0\to X$ is a smooth point of $F_p$. 

%Let $L_p\subset F_p$ be the locus of multiple covers of a line. If $C_0\to X$ is in $L_p$, then it is contained in a component of $L_p$, so $C$ is in $L_p$. So if $C$ is not a multiple cover of a line, then neither is $C_0$. 

Each component of the fiber of $\M(X,\tau)\to X$ over $p$ lies in a unique component of $F_p$. What we need to show is that as we vary over all nondegenerate basic $A$-graphs $\tau$ we only get one component of $F_p$ that contains curves that are not covers of a line. To do this, we will reduce ourselves to looking at ``combs'' of lines, where the backbone gets collapsed. One can get this by specializing a tree of lines to a ``broom'', where all the lines pass through $p$. For clarity, we will instead first reduce to the case of chains of lines and then specialize the chain of lines to a comb. 

As before, let $C_0\to X$ be in $\mc{M}(X,\tau)$, where $\tau$ is a nondegenerate basic $A$-graph. Let $(C_v)_{v\in \Ve(\tau)}$ be the components of $C_0$. Note that each $C_v\cong \mb{P}^1$. Let $v_0$ be the vertex to which the unique tail of $\tau$ is attached. By abuse of notation, we call the marked point in $C_{v_0}$ that maps to $p$ under $C\to X$ also as $p\in C_{v_0}$. Let $q_{a_1},\ldots,q_{a_r}\in C_{v_0}$ correspond to the edges attached to $v_0$ in $\tau$, or the points of attachment of the other components of $C$ to $C_{v_0}$. 
\begin{center}
\begin{tikzpicture}
\draw[gray, thick] (0,0) -- (8,0);
\filldraw[black] (1,0) circle (2pt) node[anchor=south]{$p$};
\filldraw[black] (2,0) circle (2pt) node[anchor=south]{$q_{a_1}$};
\filldraw[black] (4,0) circle (2pt) node[anchor=south]{$q_{a_2}$};
\filldraw[black] (5,0) circle (2pt) node[anchor=south]{$q_{a_3}$};
\filldraw[black] (6,0) circle (2pt) node[anchor=south]{$q_{a_4}$};
\filldraw[black] (8.5,0) circle (1pt) node{};
\filldraw[black] (9,0) circle (1pt) node{};
\filldraw[black] (9.5,0) circle (1pt) node{};
\filldraw[black] (4,-.5) circle (0pt) node{$C_{v_0}$};

\draw[gray, thick] (2,0) -- (2+.5,0+3/2);
\filldraw[black] (2+.5+1/6,3/2+1/2) circle (1pt) node{};
\filldraw[black] (2+.5+2/6,3/2+2/2) circle (1pt) node{};
\filldraw[black] (2+.5+3/6,3/2+3/2) circle (1pt) node{};

\draw[gray, thick] (4,0) -- (4+.5,0+3/2);
\filldraw[black] (4+.5+1/6,3/2+1/2) circle (1pt) node{};
\filldraw[black] (4+.5+2/6,3/2+2/2) circle (1pt) node{};
\filldraw[black] (4+.5+3/6,3/2+3/2) circle (1pt) node{};

\draw[gray, thick] (5,0) -- (5+.5,0+3/2);
\filldraw[black] (5+.5+1/6,3/2+1/2) circle (1pt) node{};
\filldraw[black] (5+.5+2/6,3/2+2/2) circle (1pt) node{};
\filldraw[black] (5+.5+3/6,3/2+3/2) circle (1pt) node{};

\draw[gray, thick] (6,0) -- (6+.5,0+3/2);
\filldraw[black] (6+.5+1/6,3/2+1/2) circle (1pt) node{};
\filldraw[black] (6+.5+2/6,3/2+2/2) circle (1pt) node{};
\filldraw[black] (6+.5+3/6,3/2+3/2) circle (1pt) node{};
\end{tikzpicture}
\end{center}

Now, we specialize the points $q_{a_1},\ldots,q_{a_r}$ one by one to a fixed general point $q_a\in C_{v_0}$. Let the resulting curve be $C_0'\to X$, given by gluing together the prestable curves $(C_{v}')_{v\in \Ve(\tau)}$. From the argument in \cite[Proposition 6]{FP}, if we want to understand what happens to $C_{v_0}$ in the limit as we specialize, it suffices to understand what happens to the map $(C_{v_0},q_{a_1},\ldots,q_{a_r})\to X$ from the pointed curve $(C_{v_0},q_{a_1},\ldots,q_{a_r})$ as we specialize the points $q_{a_1},\ldots,q_{a_r}$. 

Then, $C_{v_0}$ gets replaced with the prestable curve $C_{v_0}'$ that is $C_{v_0}$ with a chain of rational curves attached at $q_a$. Proposition \ref{smooth} tells us that $C_0'\to X$ is a smooth point of $F_p$. By induction, the space of degree $e-1$ curves through $q_a$ contains only one component with curves that do not cover a line.

This means $C_0'\to X$ is in the same component of $F_p$ as the curve we get when we take $C_{v_0}\cong \mb{P}^1$ and attach a general chain of lines to $q_a$ in the same component of degree $e-1$ curves through $q_a$. This chain of lines may be a cover of a line. In this way, we have reduced to the case where $\tau$ is a chain. 

Now, let $v_0,\ldots, v_{e-1}$ be $\Ve(\tau)$, where each $v_{i}$ is connected to $v_{i+1}$ for $0\leq i\leq e-2$. Let the unique tail of $\tau$ be attached to $v_0$ and $h: C\to X$ given by $(h_{i}: C_{v_i}\to X)$ be a point of $\M(X,\tau)\cap F_p$, general in its component. As before, each $h_i: C_{v_i}\cong \mb{P}^1\to X$ is an embedding of a line. Now, we want to specialize the points of attachment. 
\begin{center}
\begin{tikzpicture}
%\filldraw[black] (7.5,1.5) circle (0pt) node[anchor=south]{$C\to X$};

\draw[gray, thick] (0,0) -- (3,1);
\draw[gray, thick] (2,1) -- (5,0);
\filldraw[black] (1,1/3) circle (2pt) node[anchor=north]{$p$} ;
\filldraw[black] (2.5,2.5/3) circle (2pt) node[anchor=west]{} ;
\filldraw[black] (1,1) circle (0pt) node{$C_{v_0}$};
\filldraw[black] (3.5,1) circle (0pt) node{$C_{v_1}$};
\filldraw[black] (4.5,.5/3) circle (2pt) node{};

\draw[gray, thick] (4,0) -- (7,1);
\draw[gray, thick] (6,1) -- (9,0);
\filldraw[black] (2.5+4,2.5/3) circle (2pt) node[anchor=west]{} ;
\filldraw[black] (1+4,1) circle (0pt) node{$C_{v_2}$};
\filldraw[black] (3.5+4,1) circle (0pt) node{$C_{v_3}$};
\filldraw[black] (4.5+4,.5/3) circle (2pt) node{};

\draw[gray, thick] (8,0) -- (11,1);
\draw[gray, thick] (10,1) -- (13,0);
\filldraw[black] (2.5+8,2.5/3) circle (2pt) node[anchor=west]{} ;
\filldraw[black] (1+8,1) circle (0pt) node{$C_{v_4}$};
\filldraw[black] (3.5+8,1) circle (0pt) node{$C_{v_5}$};
\filldraw[black] (4.5+8,.5/3) circle (2pt) node{};

\draw[gray, thick] (12,0) -- (13,1/3);
\filldraw[black] (14,1/2) circle (1pt) node{};
\filldraw[black] (14.5,1/2) circle (1pt) node{};
\filldraw[black] (15,1/2) circle (1pt) node{};
\end{tikzpicture}
\end{center}

Let $p_0=p$ and $p_i$ be the point of $C_{v_i}$ that is attached to $C_{v_{i-1}}$ under $h:C\to X$ for $1\leq i\leq e-1$. For $0\leq i\leq e-2$, let $q_i\in C_{v_i}$ be the point that is attached to $C_{v_{i+1}}$. Now, we take a 1-dimensional family that specializes $q_0$ to $p_0$. Then, we specialize $q_1$ to $p_1$. Let $C'\to X$ given by $(h_i': C_{v_i}'\to X)$ be the result after specializing $q_i$ to $p_i$ for all $0\leq i\leq n-2$. Then, each $C_{v_i}'$ for $0\leq i\leq e-2$ becomes a union of two rational curves, where the one containing $q_i$ and $p_i$ is collapsed under the map to $X$. 

\begin{center}
\begin{tikzpicture}
\draw[gray] (0,-1) .. controls (2,0) .. (4,-1);
\draw[gray] (3,-1) .. controls (5,0) .. (7,-1);
\draw[gray] (6,-1) .. controls (8,0) .. (10,-1);
\draw[gray] (9,-1) .. controls (11,0) .. (13,-1);

\draw[gray, thick] (2.5,0) -- (2.5,-3);
\draw[gray, thick] (5.5,0) -- (5.5,-3);
\draw[gray, thick] (8.5,0) -- (8.5,-3);
\draw[gray, thick] (11.5,0) -- (11.5,-3);

\filldraw[black] (.5,-.75) circle (2pt) node[anchor=south]{$p$};

\filldraw[black] (3.5,-.75) circle (2pt) node[anchor=south]{$q_0=p_1$};
\filldraw[black] (6.5,-.75) circle (2pt) node[anchor=south]{$q_1=p_2$};;
\filldraw[black] (9.5,-.75) circle (2pt) node[anchor=south]{$q_2=p_3$};;

\filldraw[black] (1.5,-1) circle (0pt) node[anchor=north]{$C_{v_0}'$};
\filldraw[black] (4.5,-1) circle (0pt) node[anchor=north]{$C_{v_1}'$};
\filldraw[black] (7.5,-1) circle (0pt) node[anchor=north]{$C_{v_2}'$};
\filldraw[black] (10.5,-1) circle (0pt) node[anchor=north]{$C_{v_3}'$};

\filldraw[black] (13.5,-1/2) circle (1pt) node{};
\filldraw[black] (14,-1/2) circle (1pt) node{};
\filldraw[black] (14.5,-1/2) circle (1pt) node{};
\end{tikzpicture}
\end{center}

Proposition \ref{smooth} tells us that $C'\to X$ is a smooth point of $F_p$. However, we note that $C'\to X$ is also a specialization of a strict map $D\to X$ lying in $\mathcal{M}(X,\tau_{{\rm comb}})\cap F_p$, where $\Ve(\tau_{{\rm comb}})=\{v_{{\rm center}},v_0,\ldots,v_{e-1}\}$ with $\beta(v_{{\rm center}})=0$ and $\beta(v_i)=1$, and the edges of $\tau_{{\rm comb}}$ connect $v_{{\rm center}}$ to each of $v_0,\ldots,v_{e-1}$. The unique tail of $\tau_{{\rm comb}}$ is attached to $v_{{\rm center}}$. Proposition \ref{smooth} says that a general choice of a strict map $D$ given by $(g_v: D_v\to X)_{v\in \Ve(\tau_{{\rm comb}})}$ is a smooth point of $F_p$. In fact, since the choice of the maps $D_{v_i}\to X$ is discrete, every choice of a strict map $D\to X$ is smooth. 

\begin{center}
\begin{tikzpicture}
\draw[gray] (0,0) --(12,0);

\filldraw[black] (6,0) circle (0pt) node[anchor=south]{$D_{v_{{\rm center}}}$};
\filldraw[black] (1,0) circle (2pt) node[anchor=north]{$p$};

\draw[gray] (2,1) --(2,-3);
\filldraw[black] (2,-1) circle (0pt) node[anchor=east]{$D_{v_{0}}$};

\draw[gray] (4,1) --(4,-3);
\filldraw[black] (4,-1) circle (0pt) node[anchor=east]{$D_{v_{1}}$};

\draw[gray] (8,1) --(8,-3);
\filldraw[black] (8,-1) circle (0pt) node[anchor=east]{$D_{v_{2}}$};

\draw[gray] (10,1) --(10,-3);
\filldraw[black] (10,-1) circle (0pt) node[anchor=east]{$D_{v_{3}}$};

\filldraw[black] (12.5,0) circle (1pt) node{};
\filldraw[black] (13,0) circle (1pt) node{};
\filldraw[black] (13.5,0) circle (1pt) node{};
\end{tikzpicture}
\end{center}

The backbone $D_{v_{{\rm center}}}$ gets collapsed to $p$ under the map to $X$ and each $D_{v_i}$ gets mapped to a line through $p$. To finish, we need to show that all strict maps $D\to X$, where not all of the $D_{v_i}$ get mapped to the same line, are in the same component of $F_p$. 

Let $S\subset \{v_0,\ldots,v_{e-1}\}$ be a strict subset, with $\# S\geq 2$ and such that the maps $D_v\to X$ for $v\in S$ do not all embed as the same line. Then, we can specialize $D\to X$ to $D'\to X$, where $D_{v_{\rm center}}'$ is now a rational curve with two components, one that is attached to $D_{v}'$ for all $v\in S$ and the other that contains $p$ and is attached to $D_{v}'$ for $v\notin S$. The maps $D_{v_i}\cong D_{v_i}'\to X$ are unchanged. Then, using the induction hypothesis, this map $D'\to X$ is in the same component of $F_p$ as the map we would get by modifying the lines $D_v'\to X$ for $v\in S$, so long as $D_v'\to X$ for $v\in S$ do not all embed as the same line. 

The end result is that $D\to V$ is in the same component as the map we would get by modifying $D_v\to X$ for $v\in S$, so long as we maintain $D_v\to X$ for $v\in S$ do not all embed as the same line. In this way, we see that all strict maps $D\to X$, where not all of the $D_{v_i}$ get mapped to the same line, are in the same component of $F_p$.
\end{proof}
\begin{center}
\begin{figure}
\label{pagefigure}
\begin{tikzpicture}
\draw (-1,0) node{Start: };

\draw[gray, thick] (0,0) -- (3,1);
\draw[gray, thick] (2,1) -- (5,0);
\filldraw[black] (1,1/3) circle (2pt) node[anchor=north]{$p$} ;

\draw[gray, thick] (4,0) -- (7,1);

\draw[gray, thick] (10,2/3)--(14,2/3);
\filldraw[black] (11,2/3) circle (2pt) node[anchor=north]{$p$} ;
\draw[gray, thick] (12,1)--(12,-1/3);
\draw[gray, thick] (13,1)--(13,-1/3);

\draw[gray, thick] (10,-4)--(14,-4);
\filldraw[black] (11,-4) circle (2pt) node[anchor=north]{$p$} ;
\draw[gray, dashed, thick] (12,-4+1/3)--(12,-6);
\draw[gray, thick] (12-1/3,-4-2/3)--(13,-4-2/3);
\draw[gray, thick] (12-1/3,-5-2/3)--(13,-5-2/3);

\draw[->,black, thick] (11.5,-2/3)--(11.5, -4+1/3);
\draw (11,-2) node[rotate=90]{specialization};

\draw[->, black, thick] (5,-1)--(10,-4+1/3);
\draw (6,-2) node[rotate=-27]{induction};

\draw[gray, dashed] (0,-1-4) .. controls (2,0-4) .. (4,-1-4);
\draw[gray, dashed] (3,-1-4) .. controls (5,0-4) .. (7,-1-4);

\draw[gray, thick] (2.5,0-4) -- (2.5,-3-4);
\draw[gray, thick] (5.5,0-4) -- (5.5,-3-4);
\draw[gray, thick] (6,-.75-4) -- (9,-.75-4);

\filldraw[black] (.5,-.75-4) circle (2pt) node[anchor=south]{$p$};

\draw[->,black, thick] (4,-2/3)--(4, -4+1/3);
\draw (3.5,-2) node[rotate=90]{specialization};

\draw[gray, dashed] (0,-1-4-6) -- (7,-1-4-6);

\draw[gray, thick] (2.5,0-4-6) -- (2.5,-3-4-6);
\draw[gray, thick] (4,0-4-6) -- (4,-3-4-6);
\draw[gray, thick] (5.5,0-4-6) -- (5.5,-3-4-6);

\filldraw[black] (.5,-1-4-6) circle (2pt) node[anchor=south]{$p$};
\draw (3,-2-4-6) node[rotate=0]{$L_1$};
\draw (4.5,-2-4-6) node[rotate=0]{$L_2$};
\draw (6,-2-4-6) node[rotate=0]{$L_2$};

\draw[->,black, thick] (4,-2/3-4-5)--(4, -6);
\draw (3.5,-8) node[rotate=90]{specialization};

\draw[gray, dashed] (0+7.5,-1-4-6) .. controls (2+7.5,0-4-6) .. (4+7.5,-1-4-6);
\draw[gray, dashed] (3+7.5,-1-4-6) .. controls (5+7.5,0-4-6) .. (7+7.5,-1-4-6);

\draw[gray, thick] (8.5,0-4-6) -- (8.5,-3-4-6);
\draw[gray, thick] (10,0-4-6) -- (10,-3-4-6);
\draw[gray, thick] (12.5,0-4-6) -- (12.5,-3-4-6);

\filldraw[black] (.5+7.5,-.75-4-6) circle (2pt) node[anchor=south]{$p$};
\draw (9,-2-4-6) node[rotate=0]{$L_1$};
\draw (10.5,-2-4-6) node[rotate=0]{$L_2$};
\draw (13,-2-4-6) node[rotate=0]{$L_2$};

\draw[->,black, thick] (6,-10)--(8,-10);
\draw (7,-9.5) node[rotate=0]{specialization};

\draw[gray, dashed] (0+7.5,-1-4-6-5) .. controls (2+7.5,0-4-6-5) .. (4+7.5,-1-4-6-5);
\draw[gray, dashed] (3+7.5,-1-4-6-5) .. controls (5+7.5,0-4-6-5) .. (7+7.5,-1-4-6-5);

\draw[gray, thick] (8.5,0-4-6-5) -- (8.5,-3-4-6-5);
\draw[gray, thick] (10,0-4-6-5) -- (10,-3-4-6-5);
\draw[gray, thick] (12.5,0-4-6-5) -- (12.5,-3-4-6-5);

\filldraw[black] (.5+7.5,-.75-4-6-5) circle (2pt) node[anchor=south]{$p$};
\draw (9,-2-4-6-5) node[rotate=0]{$L_1'$};
\draw (10.5,-2-4-6-5) node[rotate=0]{$L_2'$};
\draw (13,-2-4-6-5) node[rotate=0]{$L_2$};

\draw[->, black, thick] (11,-.75-4-6-4.5)--(11,-.75-4-7.5);
\draw[<-, black, thick] (11,-.75-4-6-4.5)--(11,-.75-4-7.5);
\draw (10.5,-.75-4-6-3) node[rotate=90]{induction};

\draw[gray, dashed] (0,-1-4-6-5) -- (7,-1-4-6-5);

\draw[gray, thick] (2.5,0-4-6-5) -- (2.5,-3-4-6-5);
\draw[gray, thick] (4,0-4-6-5) -- (4,-3-4-6-5);
\draw[gray, thick] (5.5,0-4-6-5) -- (5.5,-3-4-6-5);

\filldraw[black] (.5,-1-4-6-5) circle (2pt) node[anchor=south]{$p$};
\draw (3,-2-4-6-5) node[rotate=0]{$L_1'$};
\draw (4.5,-2-4-6-5) node[rotate=0]{$L_2'$};
\draw (6,-2-4-6-5) node[rotate=0]{$L_2$};

\draw[->,black, thick] (6,-10-5)--(8,-10-5);
\draw (7,-9.5-5) node[rotate=0]{specialization};
\end{tikzpicture}
\caption{A diagram of the proof of Theorem \ref{fiber} in the case $e=3$. Components depicted with solid lines embed as lines in the hypersurface $X$ and components depicted with dashed lines are collapsed. The lines $L_1,L_2,L_3$ are not all the same by assumption, so we assume without loss of generality that $L_1\neq L_2$. 
}
\end{figure}
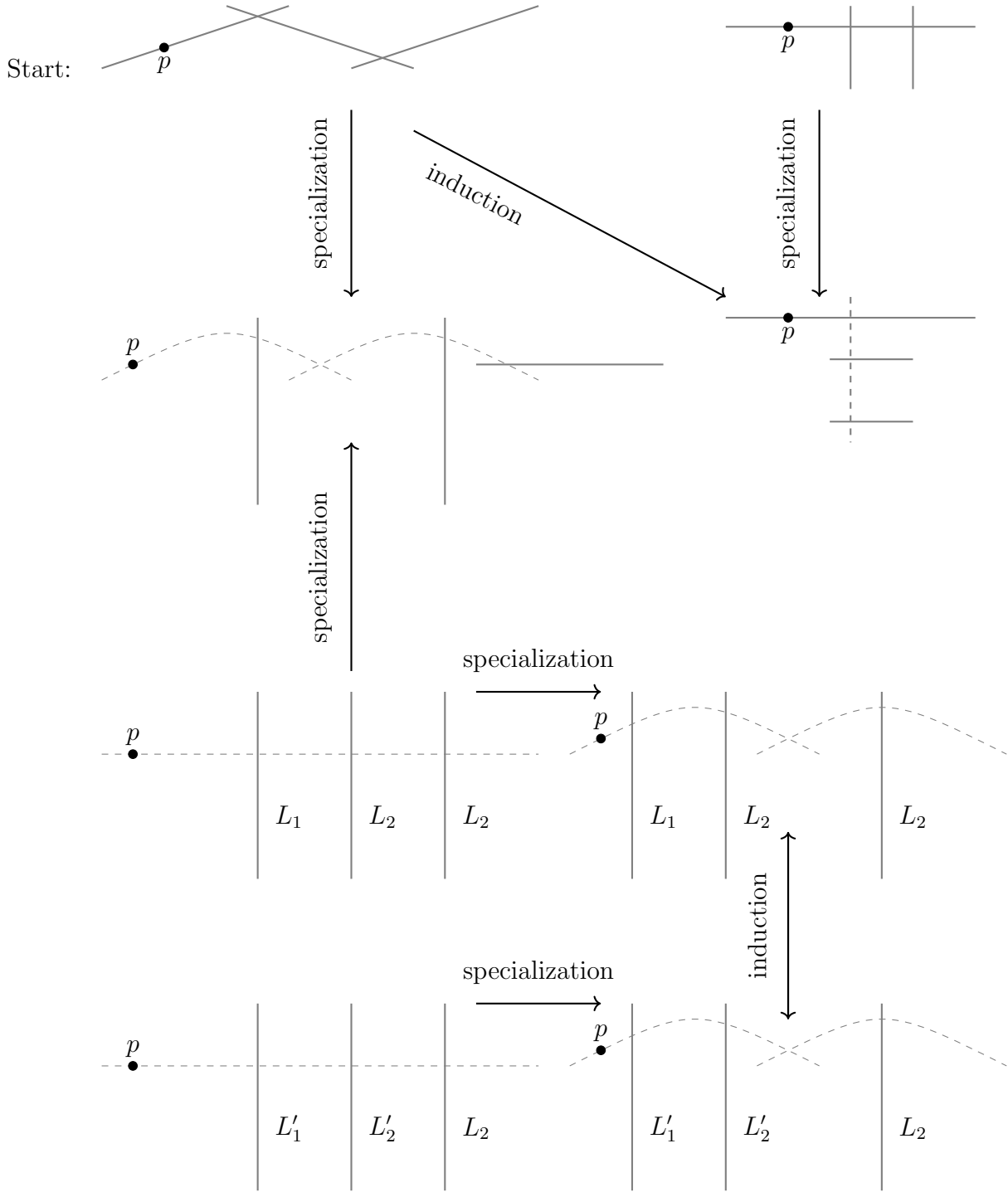
\end{center}
%If the degree $d$ of $X\subset \mb{P}^n$ is at most $2n-3$, then the Fano scheme of lines is smooth \cite[Theorem 4.3]{KJ}. 
\clearpage
\bibliographystyle{plain}
\bibliography{references.bib}
\end{document}